\newtheorem{theo}{Theorem}[section]
\newtheorem{exm}{Example}[section]
\newtheorem{lem}{Lemma}
\newtheorem{cor}{Corollary}[section]
\newtheorem{rem}{Remark}[section]
\newtheorem{defi}{Definition}
\newtheorem{ques}{Question}
\newtheorem{note}{Note}
\newcommand{\be}{\begin{equation}}
\newcommand{\ee}{\end{equation}}
\newcommand{\beas}{\begin{eqnarray*}}
\newcommand{\eeas}{\end{eqnarray*}}
\newcommand{\bea}{\begin{eqnarray}}
\newcommand{\eea}{\end{eqnarray}}
\numberwithin{equation}{section}
\begin{document}

\setlength{\unitlength}{1mm} \baselineskip .52cm
\setcounter{page}{1}
\pagenumbering{arabic}
\title[Common Fixed Point Theorems on Complete and Weak $G$-Complete Fuzzy ...]{Common Fixed Point Theorems on Complete and Weak $G$-Complete Fuzzy Metric Spaces}

\author[Sugata Adhya and A. Deb Ray]{Sugata Adhya and A. Deb Ray}

\address{Department of Mathematics, The Bhawanipur Education Society College. 5, Lala Lajpat Rai Sarani, Kolkata 700020, West Bengal, India.}
\email {sugataadhya@yahoo.com}

\address{Department of Pure Mathematics, University of Calcutta. 35, Ballygunge Circular Road, Kolkata 700019, West Bengal, India.}
\email {debrayatasi@gmail.com}

\maketitle

\begin{abstract}

Motivated by Gopal and Vetro [Iranian Journal of Fuzzy Systems, 11(3), 95-107], we introduce a symmetric pair of $\beta$-admissible mappings and obtain common fixed point theorems for such a pair in complete and weak $G$-complete fuzzy metric spaces. In particular, we rectified, generalize and improve the common fixed point theorem obtained by Turkoglu and Sangurlu [Journal of Intelligent \& Fuzzy Systems, 26(1), 137-142] for two fuzzy $\psi$-contractive mappings. We include non-trivial examples to exhibit the generality and demonstrate our results.

\end{abstract}

\noindent{\textit{AMS Subject Classification:} 47H10, 54A40, 54H25}.\\
{\textit{Keywords:} {Common fixed point, Fuzzy metric space, Weak $G$-complete.}} 

\section{\textbf{Introduction}}

Finding an appropriate analogue for metric spaces in fuzzy setting was a long standing problem. In 1975, motivated by the idea of Menger spaces \cite{men}, Kramosil and Michalek \cite{km} gave a solution to this problem and introduced fuzzy metric spaces. Grabiec \cite{g88}, in 1988, defined Cauchy sequences in such spaces. However, while modifying the definition of fuzzy metric, George and Veeramani \cite{ver94} strengthened Grabiec’s definition of Cauchy sequences which is now widely accepted as standard for fuzzy metric spaces.

In fact, Grabiec's original definition of Cauchy sequence (now known as $G$-Cauchy sequence \cite{wg}) was so weak that even a compact fuzzy metric space fails to be complete (now known as $G$-complete \cite{wg}) in the Grabiec's sense. Due to this drawback, in \cite{wg}, Gregori et. al. introduced a new form of completeness. It is called weak $G$-completeness for fuzzy metric spaces. Weak $G$-completeness has been further studied in \cite{saadr1} and \cite{saadr2}. In this paper we establish certain fixed point theorems in weak $G$-complete fuzzy metric spaces.

It is known that, alike metric spaces, fixed point theory is a rich subfield of fuzzy metric spaces where contractive and contractive-type mappings play important roles for obtaining fixed points theorems. The first fuzzy version of Banach Contraction Principle was established in 1988 by Grabiec for $G$-complete fuzzy metric spaces \cite{g88}. In 2002, Gregori and Sapena introduced fuzzy contractive mappings and obtained several fixed point theorems for complete fuzzy metric spacesi \cite{gs2}. Mihet enlarged this class of contractive mappings and introduced the notion of fuzzy $\psi$-contractive mappings. This new class of mappings was utilized to establish a new version of fuzzy Banach contraction theorem for complete non-Archimedean fuzzy metric spaces \cite{mi} which was further generalized for weak $G$-complete fuzzy metric spaces \cite{wg}.

The above class of contractive mappings, introduced by Mihet, has been extensively used to obtain fixed point theorems in fuzzy metric spaces. In 2014, Turkoglu and Sangurlu obtained a common fixed point theorem for a pair of fuzzy $\psi$-contractive mappings in $G$-complete fuzzy metric spaces \cite{ts}.

In this paper, motivated by the work of Gopal and Vetro \cite{gv}, we introduce a symmetric pair of $\beta$-admissible mappings and a pair of $\beta$-$\psi$-fuzzy contractive mappings. These new families are utilized here to establish common fixed point theorems in complete and weak $G$-complete fuzzy metric spaces, both in the sense of \cite{ver94} and \cite{km}. In particular, we rectified the fixed point theorem obtained by Turkoglu et. al. \cite{ts} and substantially generalize and improve it. Our theory is supported and illustrated by appropriate examples.

\section{\textbf{Preliminaries}}

In this section, we recall some basic definitions and facts which are referred subsequently. 

\begin{defi}
\normalfont\cite{ss} A mapping $*:[0,1]\times[0,1]\to[0,1]$ is called a continuous $t$-norm if (i) $*$ is associative and commutative, (ii) $*$ is continuous, (iii) $a*1=a,~\forall~a\in[0,1],$ and (iv) for $a,b,c,d\in[0,1],$ $a\le c,b\le d\implies a*b\le c*d.$\\

It is easy to note that, the followings are examples of continuous $t$-norms:

(i)  $a*b=\min(a,b),$ and 

(ii) $a*b=ab$ 

for any $a,b\in[0,1],$ 
\end{defi}

\begin{defi}
\normalfont(Kramosil and Michalek \cite{km}) Given a nonempty set $X,$ a continuous $t$-norm $*$ and a mapping $M:X\times X\times[0,\infty)\to[0,1],$ the ordered triple $(X,M,*)$ is called a KM fuzzy metric space if, for all $x,y,z\in X$ and $s,t>0,$ the following conditions hold:

a) $M(x,y,0)=0,$

b) $M(x,y,t)=1,~\forall~t>0\iff x=y,$

c) $M(x,y,t)=M(y,x,t),$

d) $M(x,y,t)*M(y,z,s)\le M(x,z,t+s),$

e) $M(x,y,.):[0,\infty)\to[0,1]$ is left continuous.
\end{defi}

\begin{defi}
\normalfont(George and Veeramani \cite{ver94}) Given a nonempty set $X,$ a continuous $t$-norm $*$ and a mapping $M:X\times X\times(0,\infty)\to[0,1],$ the ordered triple $(X,M,*)$ is called a GV fuzzy metric space if, for all $x,y,z\in X$ and $s,t>0,$ the following conditions hold:

a) $M(x,y,t)>0,$

b) $M(x,y,t)=1\iff x=y,$

c) $M(x,y,t)=M(y,x,t),$

d) $M(x,y,t)*M(y,z,s)\le M(x,z,t+s),$

e) $M(x,y,.):(0,\infty)\to[0,1]$ is continuous.

Unless otherwise specified, by a fuzzy metric space we refer to the GV fuzzy metric space.
\end{defi}

\begin{defi}\cite{is}
\normalfont A (KM) fuzzy metric space $(X,M,*)$ is said to be non-Archimedean if $M(x,y,t)*M(y,z,s)\le M(x,z,\max\{t,s\}),$ for all $x,y,z\in X$ and $s,t>0.$
\end{defi}

\begin{lem}
\normalfont(\cite{ver94}, \cite{g88}) Given a (KM) fuzzy metric space $(X,M,*),$ $M(x,y,\cdot)$ defines a nondecreasing map, $\forall~x,y\in X.$
\end{lem}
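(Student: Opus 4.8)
The plan is to exploit the triangle inequality (axiom (d)) with $y$ (equivalently $x$) playing the role of the intermediate point, taking advantage of the fact that $M(y,y,\cdot)$ is identically $1$ on the positive reals and that $1$ is the unit for the $t$-norm.

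Fix $x,y\in X$ and real numbers $0<s<t$. Applying (d) with middle point $y$ and the splitting $t=(t-s)+s$ gives
\[
M(x,y,s)*M(y,y,t-s)\le M(x,y,(t-s)+s)=M(x,y,t).
\]
Here $t-s>0$, and from the identification axiom (axiom (b) in either definition, which in particular yields $M(y,y,r)=1$ for every $r>0$) we obtain $M(y,y,t-s)=1$. Combining this with property (iii) of a continuous $t$-norm, namely $a*1=a$ for all $a\in[0,1]$, we conclude $M(x,y,s)=M(x,y,s)*1\le M(x,y,t)$. Hence $M(x,y,\cdot)$ is nondecreasing on $(0,\infty)$.

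In the KM case the domain of the third variable additionally includes $0$; there one simply invokes axiom (a), $M(x,y,0)=0\le M(x,y,t)$ for every $t>0$, so the monotonicity statement extends to all of $[0,\infty)$.

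There is no serious obstacle in this argument: the only point needing any attention is that the third argument $t-s$ of the auxiliary factor $M(y,y,t-s)$ is \emph{strictly} positive, which is precisely what licenses the substitution $M(y,y,t-s)=1$; everything else is the unit law for $*$ together with the chosen splitting of $t$.
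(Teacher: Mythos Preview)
Your proof is correct and is exactly the standard argument for this classical fact. The paper itself does not supply a proof of this lemma; it merely cites it from \cite{ver94} and \cite{g88}, where the same triangle-inequality-plus-unit-law reasoning appears.
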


Let $(X,M,*)$ be a (KM) fuzzy metric space. It is well-known (\cite{ver94}, \cite{wg}) that, $\{B(x,r,t):x\in X,r\in(0,1),t>0\}$ forms a base for some topology $\tau_M$ on $X,$ where $B(x,r,t)=\{y\in X:M(x,y,t)>1-r\},~\forall~x\in X,r\in(0,1),t>0$. The topological behaviour of $(X,M,*)$ are defined with respect to the topology $\tau_M.$ In particular, given two (KM) fuzzy metric spaces $(X,M,*)$ and $(Y,N,\star),$ a mapping $f:X\to Y$ is called continuous if $f$ is continuous as a mapping from $(X,\tau_M)$ to $(Y,\tau_N).$

Similarly, sequential convergence are defined as follows: A sequence $(x_n)$ in a (KM) fuzzy metric space $(X,M,*)$ is said to be convergent to some $x\in X$ (\textit{resp.} clusters), if it does so in $(X,\tau_M).$

It is easy to note that, if $f$ is a continuous mapping from a (KM) fuzzy metric space $(X,M,*)$ to a (KM) fuzzy metric space $(Y,N,\star),$ and $(x_n)$ is a sequence in $X$ converging to $x\in X,$ then $f(x_n)\to f(x)$ as $n\to\infty.$

The following is an easy consequence that has been shown in \cite{ver94} for GV fuzzy metric spaces. The case for KM fuzzy metric spaces is similar as stated next.

\begin{theo}
\normalfont A sequence $(x_n)$ in a (KM) fuzzy metric space $(X,M,*)$ converges to $x\in X$ if and only if $\lim\limits_{n\to\infty}M(x_n,x,t)= 1,~\forall~t>0.$
\end{theo}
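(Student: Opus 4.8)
The plan is to prove the two implications separately, after recording the description of $\tau_M$-convergence that is actually used. Since $\{B(y,r,s):y\in X,\ r\in(0,1),\ s>0\}$ is a base for $\tau_M$ and $M(y,y,s)=1$ by axiom (b), a sequence converges to $x$ in $\tau_M$ precisely when, for every basic set $B(y,r,s)$ containing $x$, one has $x_n\in B(y,r,s)$ for all large $n$. The necessity ($\Rightarrow$) is then immediate: fixing $t>0$ and $\varepsilon\in(0,1)$, the set $B(x,\varepsilon,t)$ is a $\tau_M$-neighbourhood of $x$ (as $M(x,x,t)=1>1-\varepsilon$), so $x_n\to x$ forces $M(x_n,x,t)>1-\varepsilon$ eventually; since $M\le 1$ this gives $\lim_n M(x_n,x,t)=1$, and $t,\varepsilon$ were arbitrary.

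For sufficiency ($\Leftarrow$), take any basic set $B(y,r,s)$ with $x\in B(y,r,s)$, i.e. $a:=M(y,x,s)>1-r$. The core of the proof is to inscribe a basic set centred at $x$: I claim there are $r'\in(0,1)$ and $t'\in(0,s)$ with $B(x,r',t')\subseteq B(y,r,s)$. To construct them, pick $r_0$ with $1-a<r_0<r$, the interval being nonempty exactly because $x\in B(y,r,s)$; since $1-r_0<a=M(y,x,s)$ and $M(y,x,\cdot)$ is left continuous at $s$ (axiom (e)), choose $t'\in(0,s)$ with $M(y,x,s-t')>1-r_0$; and since $(1-r_0)*1=1-r_0>1-r$ with $*$ continuous, choose $r'\in(0,1)$ with $(1-r_0)*(1-r')>1-r$. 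Then for $z\in B(x,r',t')$, the triangle inequality (d), symmetry (c) and monotonicity of $*$ yield
\[
M(y,z,s)\ \ge\ M(y,x,s-t')*M(x,z,t')\ \ge\ (1-r_0)*(1-r')\ >\ 1-r,
\]
so $z\in B(y,r,s)$; also $x\in B(x,r',t')$ since $M(x,x,t')=1$. Now apply the hypothesis with the parameter $t'$: $M(x_n,x,t')\to 1$, so $M(x_n,x,t')>1-r'$ for all large $n$, whence $x_n\in B(x,r',t')\subseteq B(y,r,s)$ eventually. As $B(y,r,s)$ was an arbitrary basic neighbourhood of $x$, $x_n\to x$ in $\tau_M$.

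I expect the only real work to be the inscribed-ball construction above, which is also the single place where the KM argument differs from the GV argument in \cite{ver94}: here $M(y,x,\cdot)$ is merely left continuous, so the time must be decreased from $s$ to $s-t'$ rather than increased, and the continuity of $*$ at $(1,1)$ is what absorbs the loss introduced by the second factor $M(x,z,t')$. The remaining manipulations are routine uses of the axioms of a (KM) fuzzy metric space.
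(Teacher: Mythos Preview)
Your proof is correct. Note, however, that the paper does not actually supply a proof of this statement: it merely records that the GV case is in \cite{ver94} and asserts that the KM case is similar. So there is nothing to compare against at the level of argument; your proposal supplies precisely the details the paper omits.

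One remark on presentation. Your sufficiency argument effectively establishes that the balls $B(x,r',t')$ centred at $x$ form a local base at $x$ (the ``inscribed-ball'' step), and then uses the hypothesis to land in such a ball. You could have stated that intermediate fact explicitly and then observed that, once it is known, the equivalence reduces to: $x_n\to x$ iff $x_n\in B(x,r,t)$ eventually for every $r\in(0,1)$ and $t>0$, which is literally the condition $\lim_n M(x_n,x,t)=1$. Your handling of the KM-specific point --- using left continuity of $M(y,x,\cdot)$ to decrease the time parameter from $s$ to $s-t'$, and continuity of $*$ at $(1-r_0,1)$ to absorb the second factor --- is exactly the right adaptation of the GV argument.
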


\begin{defi}
\normalfont (\cite{ver94}, \cite{s66}) Let $(X,M,*)$ be a (KM) fuzzy metric space. A sequence $(x_n)$ in $X$ is called Cauchy if for $\epsilon\in(0,1),t>0,$ there exists $k\in\mathbb N$ such that  $M(x_m,x_n,t)>1-\epsilon,~\forall~m,n\ge k.$ Clearly every convergent sequence in $(X,M,*)$ is Cauchy. $(X,M,*)$ is complete if every Cauchy sequence in it converges.
\end{defi}

\begin{defi}
\normalfont (\cite{g88}, \cite{wg}) Let $(X,M,*)$ be a (KM) fuzzy metric space. A sequence $(x_n)$ in $X$ is called $G$-Cauchy if $\lim\limits_{n\to\infty}M(x_n,$ $x_{n+1},t)=1,~\forall~t>0.$ If, in $(X,M,*),$ every $G$-Cauchy sequence converges, then $(X,M,*)$ is said to be $G$-complete.

Though $G$-completeness necessarily imply completeness in (KM) fuzzy metric spaces, a compact (KM) fuzzy metric space may not be $G$-complete.
To overcome this drawback, the following weaker version of completeness has been introduced in \cite{wg} .

A (KM) fuzzy metric space in which every $G$-Cauchy sequence clusters is called a weak $G$-complete (KM) fuzzy metric space.
\end{defi}

We finish this section by recalling the definitions of $\beta$-$\psi$-fuzzy contractive mapping and $\beta$-admissible mapping introduced by Gopal and Vetro in \cite{gv}.

Following \cite{mi}, we denote by $\Psi$ the family of all mappings $\psi:[0,1]\to[0,1]$ such that,

(i) $\psi$ is non-decreasing and continuous, 

(ii) $\psi(t)>t,~\forall~t\in(0,1).$ 

It is easy to check that $\psi(1)=1$ and $\lim\limits_{n\to\infty}\psi^n(r)=1,~\forall~\psi\in\Psi,r\in(0,1)$ (e.g. consult \cite{cv11}). 

\begin{defi}
\normalfont Let $(X,M,*)$ be a (KM) fuzzy metric space and $f$ a self-mapping on $X$. For some $\psi\in\Psi,$ and a mapping $\beta:X^2\times(0,\infty)\to(0,\infty),$  $f$ is called a $\beta$-$\psi$-fuzzy contractive mappings if $\forall~x,y\in X$ with $x\ne y$ and $t>0,$ $$M(x,y,t)>0\implies\beta(x,y,t)M(fx,fy,t)\ge\psi(M(x,y,t)).$$\end{defi}

\begin{defi}
\normalfont Let $(X,M,*)$ be a (KM) fuzzy metric space and $f$ a self-mapping on $X$. For a mapping $\beta:X^2\times(0,\infty)\to(0,\infty),$  $f$ is called $\beta$-admissible if $\forall~x,y\in X,t>0,$ $$\beta(x,y,t)\le1\implies\beta(fx,fy,t)\le1.$$
\end{defi}

\section{\textbf{Main Results}}

We begin this section by introducing a pair of $\beta$-$\psi$-fuzzy contractive mappings and a symmetric pair of $\beta$-admissible mappings that extend respectively the class of $\beta$-$\psi$-fuzzy contractive mappings and $\beta$-admissible mappings in a (KM) fuzzy metric space. 

\begin{defi}
\normalfont(\cite{mi}, \cite{cv11}) Let $(X,M,*)$ be a (KM) fuzzy metric space and $\psi\in\Psi.$ 

a) A mapping $f:X\to X$ is called fuzzy $\psi$-contractive if $\forall~x,y,\in X,t>0,$ $$M(x,y,t)>0\implies M(fx,fy,t)\ge\psi(M(x,y,t)).$$

b) Given a pair of mappings $f,g:X\to X,$ $(f,g)$ is called a pair of fuzzy $\psi$-contractive mappings if $\forall~x,y,\in X,t>0,$ $$M(x,y,t)>0\implies M(fx,gy,t)\ge\psi(\min\{M(x,y,t),M(x,fx,t),M(y,gy,t)\}).$$
\end{defi}

\begin{defi}
\normalfont Let $(X,M,*)$ be a (KM) fuzzy metric space and $f,g$ be self-mappings on $X$. For a mapping $\beta:X^2\times(0,\infty)\to(0,\infty),$  the pair $(f,g)$ is called a symmetric pair of $\beta$-admissible mappings if $\forall~x,y\in X,t>0,$ $$\beta(x,y,t)\le1\implies\max\{\beta(fx,gy,t),\beta(gy,fx,t),\beta(gx,fy,t),\beta(fy,gx,t)\}\le1.$$ 
\end{defi}

We note that if  $\beta(x,y,t)\equiv1,$ then a pair of $\beta$-$\psi$-fuzzy contractive mappings is a pair of $\psi$-contractive mappings.

\begin{theo}\label{res3}
\normalfont Let $(X,M,*)$ be a weak $G$-complete (KM) fuzzy metric space and $f,g$ be continuous self-mappings on $X$ satisfying $M(x,fx,t),M(x,gx,t)>0,$ $\forall~x\in X,t>0.$ Suppose for some $\psi\in\Psi,$ and a mapping $\beta:X^2\times(0,\infty)\to(0,\infty),$ $(f,g)$ is a pair of $\beta$-$\psi$-fuzzy contractive mappings such that

(i) $(f,g)$ is a symmetric pair of $\beta$-admissible mappings,

(ii) for some $x_0\in X,$ $\beta(x_0,fx_0,t)\le1$ and $M(x_0,fx_0,t)>0,$ $\forall~t>0.$

Then $f$ and $g$ have a common fixed point in $X.$ 

Moreover, if $\beta(x,y,t)\le1,~\forall~x,y\in X,t>0$ and for $x,y~(x\ne y)\in X,$ $M(x,y,t)>0,~\forall~t>0,$ then the fixed point is unique.
\end{theo}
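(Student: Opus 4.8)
The plan is to construct a single sequence by alternately applying $f$ and $g$ starting from the point $x_0$ furnished by hypothesis (ii), show it is $G$-Cauchy, extract a cluster point using weak $G$-completeness, and then use continuity of $f,g$ to identify the cluster point as a common fixed point. Concretely, set $x_{2n+1}=fx_{2n}$ and $x_{2n+2}=gx_{2n+1}$ for $n\ge 0$. The symmetric $\beta$-admissibility condition, applied inductively, should give $\beta(x_n,x_{n+1},t)\le 1$ for every $n$ and $t>0$: from $\beta(x_{2n},x_{2n+1},t)\le 1$ one gets (taking $x=x_{2n}$, $y=x_{2n+1}$) that $\beta(fx_{2n},gx_{2n+1},t)=\beta(x_{2n+1},x_{2n+2},t)\le 1$, and similarly for the next step using the $gx\text{-}fy$ slot. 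Simultaneously I would need $M(x_n,x_{n+1},t)>0$ for all $n,t$; this follows from hypothesis (ii) for $n=0$ and is propagated by the standing assumption $M(x,fx,t),M(x,gx,t)>0$ for all $x$.

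**Next** I would derive the contraction estimate along the sequence. Applying the pair-$\beta$-$\psi$-contractivity with $x=x_{2n}$, $y=x_{2n+1}$ (assuming consecutive terms are distinct — otherwise we are essentially done, which needs a small separate argument) gives
\[
M(x_{2n+1},x_{2n+2},t)\ \ge\ \beta(x_{2n},x_{2n+1},t)\,M(fx_{2n},gx_{2n+1},t)\ \ge\ \psi\big(\min\{M(x_{2n},x_{2n+1},t),M(x_{2n},x_{2n+1},t),M(x_{2n+1},x_{2n+2},t)\}\big),
\]
using $\beta\le 1$ on the left and the definition of the pair-contractivity on the right. Since $\psi(r)>r$ on $(0,1)$, the inner minimum cannot be $M(x_{2n+1},x_{2n+2},t)$ unless the two consecutive distances are already equal to $1$; hence $M(x_{2n+1},x_{2n+2},t)\ge\psi(M(x_{2n},x_{2n+1},t))$. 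The analogous computation with $(x_{2n+1},x_{2n+2})$ handles the odd step. Iterating yields $M(x_n,x_{n+1},t)\ge\psi^n(M(x_0,x_1,t))$, and since $M(x_0,x_1,t)\in(0,1]$ and $\psi^n(r)\to 1$, we conclude $\lim_{n\to\infty}M(x_n,x_{n+1},t)=1$ for every $t>0$, i.e. $(x_n)$ is $G$-Cauchy.

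**Then** weak $G$-completeness gives a cluster point $z\in X$ and a subsequence $x_{n_k}\to z$. Passing to a further subsequence I can assume the $n_k$ all have the same parity, say even, so $x_{n_k}\to z$ and $x_{n_k+1}=fx_{n_k}\to fz$ by continuity of $f$; but also $x_{n_k+1}$ is a subsequence of the $G$-Cauchy sequence whose consecutive distances tend to $1$, together with $x_{n_k}\to z$, forcing $x_{n_k+1}\to z$ as well (using Theorem~2.1: $M(x_{n_k+1},z,t)\ge M(x_{n_k+1},x_{n_k},t/2)*M(x_{n_k},z,t/2)\to 1$). Uniqueness of limits in $\tau_M$ then gives $fz=z$. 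A parallel argument with $x_{n_k+2}=gx_{n_k+1}$ — noting $x_{n_k+1}\to z$ and continuity of $g$ — gives $gz=z$, so $z$ is a common fixed point. For the moreover clause, suppose $z,w$ are distinct common fixed points; since $\beta\equiv\text{(something)}\le 1$ and $M(z,w,t)>0$ for all $t$, the pair-contractivity at $(z,w)$ yields $M(z,w,t)=M(fz,gw,t)\ge\psi(\min\{M(z,w,t),M(z,fz,t),M(w,gw,t)\})=\psi(\min\{M(z,w,t),1,1\})=\psi(M(z,w,t))>M(z,w,t)$, a contradiction, so $z=w$.

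**The main obstacle** I anticipate is the bookkeeping around degenerate cases: when two consecutive iterates coincide, the strict inequality $\psi(r)>r$ is used on $(0,1)$ only, so one must argue separately that a repeated term already produces a common fixed point (e.g. if $x_{2n}=x_{2n+1}=fx_{2n}$, then $x_{2n}$ is a fixed point of $f$, and one still has to feed it through $g$), and the parity-selection step when extracting the convergent subsequence must be handled carefully so that both $fz=z$ and $gz=z$ come out. Everything else is a routine combination of the contraction iteration, the $\psi^n(r)\to 1$ fact, and the triangle-type inequality (d) together with Theorem~2.1.
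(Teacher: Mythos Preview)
Your proposal is correct and follows essentially the same route as the paper: construct the alternating Picard sequence, propagate $\beta(x_n,x_{n+1},t)\le 1$ via symmetric $\beta$-admissibility, derive $M(x_n,x_{n+1},t)\ge\psi^n(M(x_0,x_1,t))$ to get a $G$-Cauchy sequence, extract a same-parity convergent subsequence from weak $G$-completeness, and use continuity of $f,g$ together with the $G$-Cauchy property to conclude $fz=gz=z$; the uniqueness argument is identical. The only cosmetic differences are that the paper writes the endgame as a direct triangle estimate $M(x,fx,t)\ge M(x,x_{r_n},t/3)*M(x_{r_n},x_{r_n+1},t/3)*M(fx_{r_n},fx,t/3)\to 1$ rather than invoking uniqueness of limits, and obtains $gx=x$ by shifting \emph{backwards} (using $x_{r_n}=gx_{r_n-1}$) instead of forwards; your concern about the degenerate case $x_n=x_{n+1}$ is something the paper simply does not address.
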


\begin{proof}
\normalfont Define a sequence $(x_n)$ as follows: $x_1=fx_0,x_2=gx_1,\cdots,x_{2n+1}=fx_{2n},x_{2n+2}=gx_{2n+1},\cdots$.

We have $\beta(x_0,x_1,t)=\beta(x_0,fx_0,t)\le1,$ $\forall~t>0.$ Since $\beta(x_{n-1},x_n,t)\le1\implies\beta(x_n,x_{n+1},t)\le\max\{\beta(fx_{n-1},gx_n,t),\beta(gx_{n-1},fx_n,t)\}\le1,$ $\forall~n\in\mathbb N,t>0,$ so by applying induction, we obtain $\beta(x_n,x_{n+1},t)\le1,~\forall~n=0,1,2,\cdots$ and $t>0.$ 

Consequently, $\beta(x_{n+1},x_n,t)\le1,~\forall~n\in\mathbb N,t>0.$

Let $n$ be a positive integer.

If $n$ is $\text{odd}$, then for chosen $t>0,$\\$M(x_{n-1},x_n,t)>0\implies M(x_n,x_{n+1},t)=M(fx_{n-1},gx_n,t)\\\ge\beta(x_{n-1},x_n,t)M(fx_{n-1},gx_n,t)\\\ge\psi(\min\{M(x_{n-1},x_n,t),M(x_{n-1},fx_{n-1},t),M(x_n,gx_n,t)\})\\=\psi(M(x_{n-1},x_n,t))>0.$ 

Again if $n$ is $\text{even},$ then for chosen $t>0,$ \\$M(x_{n-1},x_n,t)>0\implies M(x_n,x_{n+1},t)=M(gx_{n-1},fx_n,t)\\=M(fx_n,gx_{n-1},t)\\\ge\beta(x_{n},x_{n-1},t)M(fx_n,gx_{n-1},t)\\\ge\psi(\min\{M(x_{n},x_{n-1},t),M(x_{n},fx_{n},t),M(x_{n-1},gx_{n-1},t)\})\\=\psi(M(x_{n-1},x_n,t))>0.$

Thus, by using induction, we have $M(x_n,x_{n+1},t)\ge\psi(M(x_{n-1},x_n,t)),$ $\forall~n\in\mathbb N,t>0$ and hence, $M(x_n,x_{n+1},t)\ge\psi^n(M(x_0,x_1,t)),$ $\forall~n\in\mathbb N,t>0.$

Thus $\lim\limits_{n\to\infty}M(x_n,x_{n+1},t)=1,~\forall~t>0.$ Hence $(x_n)$ is a $G$-Cauchy sequence in $X$.

Since $(X,M,*)$ is weak $G$-complete, $(x_n)$ has a cluster point $x$ in $X.$

So there exists a subsequence $(x_{r_n})$ of $(x_n)$ such that $(x_{r_n})$ converges to $x$ where $r_n$'s are either all even or all odd.

Without loss of generality, suppose all the $r_n$'s are even. Then $\forall~n\in\mathbb N,t>0,$ 

\noindent$M(x,fx,t)\ge M\left(x,x_{r_n},\frac{t}{3}\right)*M\left(x_{r_n},x_{r_n+1},\frac{t}{3}\right)*M\left(x_{r_n+1},fx,\frac{t}{3}\right)$

\noindent$=M\left(x,x_{r_n},\frac{t}{3}\right)*M\left(x_{r_n},x_{r_n+1},\frac{t}{3}\right)*M\left(fx_{r_n},fx,\frac{t}{3}\right)$ 

Taking limit as $n\to\infty,$ we have $M(x,fx,t)=1,~\forall~t>0,$ since $f$ is continuous and $(x_n)$ is $G$-Cauchy. Hence $fx=x.$

Again, since $\forall~t>0,$ $M(x_{r_n-1},x,t)\ge M\left(x_{r_n-1},x_{r_n},\frac{t}{2}\right)*M\left(x_{r_n},x,\frac{t}{2}\right)\to1*1=1$ as $n\to\infty,$ so $\forall~t>0,$ $M(x_{r_n-1},x,t)\to1$ as $n\to\infty,$ and consequently, $\forall~t>0,$ $M(x,gx,t)\ge M\left(x,x_{r_n},\frac{t}{2}\right)*M\left(x_{r_n},gx,\frac{t}{2}\right)=M\left (x,x_{r_n},\frac{t}{2}\right)*M\left(gx_{r_n-1},gx,\frac{t}{2}\right)\to1*1=1$ as $n\to\infty,$ since $g$ is continuous.

Thus $M(x,gx,t)=1,~\forall~t>0,$ and hence $gx=x.$

Again, if all the $r_n$'s are odd, it can be similarly shown that $fx=gx=x.$

Thus, in general, $x$ is a common fixed point of both $f$ and $g.$

Let us now assume, in addition to condition (i) and (ii), that $\beta(x,y,t)\le1,~\forall~x,y\in X,t>0$ and for $x,y~(x\ne y)\in X,$ $M(x,y,t)>0,$ $\forall~t>0.$

If possible, let $x,y$ be two common fixed points of $f$ and $g$ such that $x\ne y.$ Then there exists $t_0>0$ such that $0<M(x,y,t_0)<1.$ 

Now $M(x,y,t_0)\\\ge\beta(x,y,t_0)M(fx,gy,t_0)\\\ge\psi(\min\{M(x,y,t_0),M(x,fx,t_0),M(y,gy,t_0)\})\\=\psi(M(x,y,t_0))\\>M(x,y,t_0),$ a contradiction. 

Hence the common fixed point is unique.
\end{proof}

By putting $\beta\equiv1$ in Theorem \ref{res3}, we have the following:

\begin{cor}\label{corts}
\normalfont Let $(X,M,*)$ be a weak $G$-complete (KM) fuzzy metric space and $f,g$ be continuous self-mappings on $X$ satisfying $M(x,fx,t),M(x,gx,t)>0,$ $\forall~x\in X,t>0.$ Suppose for some $\psi\in\Psi$ and $x_0\in X,$ $M(x_0,fx_0,t)>0,$ $\forall~t>0$ and $M(fx,gy,t)\ge\psi(\min\{M(x,y,t),M(x,fx,t),M(y,gy,t)\}),$ $\forall~x,y\in X,t>0.$ Then $f$ and $g$ have a common fixed point in $X.$ 

Moreover, if for $x,y~(x\ne y)\in X,$ $M(x,y,t)>0,~\forall~t>0,$ then the fixed point is unique.
\end{cor}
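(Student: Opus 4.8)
The plan is to derive Corollary~\ref{corts} as the special case $\beta\equiv 1$ of Theorem~\ref{res3}; the whole argument is then just the bookkeeping of checking that, under this choice of $\beta$, every hypothesis of Theorem~\ref{res3} holds.

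First I would note that when $\beta(x,y,t)\equiv 1$, the defining inequality of a pair of $\beta$-$\psi$-fuzzy contractive mappings, namely $\beta(x,y,t)M(fx,gy,t)\ge\psi(\min\{M(x,y,t),M(x,fx,t),M(y,gy,t)\})$, collapses to $M(fx,gy,t)\ge\psi(\min\{M(x,y,t),M(x,fx,t),M(y,gy,t)\})$. This is exactly the contractive inequality assumed in the corollary; there it is in fact assumed for \emph{all} $x,y\in X$, hence a fortiori whenever $x\ne y$ and $M(x,y,t)>0$, so $(f,g)$ is a pair of $\beta$-$\psi$-fuzzy contractive mappings in the sense used by Theorem~\ref{res3} (cf.\ the remark immediately preceding that theorem).

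Next I would dispatch conditions (i) and (ii). With $\beta\equiv 1$ we have $\beta(x,y,t)\le 1$ and $\max\{\beta(fx,gy,t),\beta(gy,fx,t),\beta(gx,fy,t),\beta(fy,gx,t)\}=1\le 1$ for every $x,y\in X$ and $t>0$, so $(f,g)$ is trivially a symmetric pair of $\beta$-admissible mappings, which is (i). For (ii), the corollary supplies $x_0\in X$ with $M(x_0,fx_0,t)>0$ for all $t>0$, while $\beta(x_0,fx_0,t)=1\le 1$ holds automatically. The remaining data --- weak $G$-completeness of $(X,M,*)$, continuity of $f$ and $g$, and $M(x,fx,t),M(x,gx,t)>0$ for all $x,t$ --- are common to both statements. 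Invoking Theorem~\ref{res3} then produces a common fixed point of $f$ and $g$.

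For uniqueness I would simply observe that the two additional hypotheses in the ``moreover'' part of Theorem~\ref{res3}, namely $\beta(x,y,t)\le 1$ for all $x,y,t$ and $M(x,y,t)>0$ for all $t>0$ whenever $x\ne y$, are respectively automatic (because $\beta\equiv 1$) and precisely the extra assumption imposed in the corollary; hence the common fixed point is unique. I do not anticipate any real obstacle: the corollary is a pure specialization, and the only point that even deserves a comment is that its contractive inequality is stated without the guard $M(x,y,t)>0$, which makes it formally stronger than --- and therefore certainly implies --- the guarded form required in the definition.
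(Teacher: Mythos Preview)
Your proposal is correct and follows exactly the paper's own approach: the corollary is obtained by setting $\beta\equiv 1$ in Theorem~\ref{res3}, and you have verified all the hypotheses carefully. The paper states this specialization in a single line without further detail, so your write-up is, if anything, more explicit than the original.
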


The last corollary shows that in the following common fixed point result obtained by Turkoglu and Sangurlu, we do not require to specify the continuous $t$-norm $*$ or assume $f,g$ to be fuzzy $\psi$-contractive mappings.

\begin{theo}\label{cort}
\normalfont (\cite{ts}, \textit{Common fixed point theorem by Turkoglu and Sangurlu}) Let $(X,M,*)$ be a $G$-complete KM fuzzy metric space such that $a*b=\min\{a,b\}.$ Let there is a $x_0\in X$ such that $M(x_0,fx_0,t)>0,~\forall~t>0.$ If $f$ and $g$ are continuous self-mappings on $X$ satisfying $M(x,fx,t),M(x,gx,t)>0,$ $\forall~x\in X,t>0$ such that for some $\psi\in\Psi,$

(i) $f,g$ are fuzzy $\psi$-contractive mappings,

(ii) $M(fx,gy,t)\ge\psi(\min\{M(x,y,t),M(x,fx,t),M(y,gy,t)\}),~\forall~x,y\in X,t>0,$

Then $f$ and $g$ have a common fixed point in $X.$
\end{theo}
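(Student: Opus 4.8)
The plan is to derive Theorem \ref{cort} as an immediate consequence of Corollary \ref{corts}, which will simultaneously make transparent that the requirement $a*b=\min\{a,b\}$ and hypothesis (i) (that $f,g$ be fuzzy $\psi$-contractive) are superfluous. The first step would be to observe that a $G$-complete (KM) fuzzy metric space is automatically weak $G$-complete: every $G$-Cauchy sequence in it converges, and a convergent sequence clusters at its limit, so every $G$-Cauchy sequence clusters. Hence the space $(X,M,*)$ of Theorem \ref{cort} qualifies as a weak $G$-complete (KM) fuzzy metric space.

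Next I would simply match the remaining hypotheses against those of Corollary \ref{corts}: $f$ and $g$ are continuous self-mappings, $M(x,fx,t)>0$ and $M(x,gx,t)>0$ for all $x\in X$ and $t>0$, there is an $x_0\in X$ with $M(x_0,fx_0,t)>0$ for all $t>0$, and, by (ii), $M(fx,gy,t)\ge\psi(\min\{M(x,y,t),M(x,fx,t),M(y,gy,t)\})$ for all $x,y\in X$, $t>0$, with $\psi\in\Psi$. None of these conditions refers to the $t$-norm being $\min$ or to hypothesis (i), so they are exactly the assumptions of Corollary \ref{corts}. Applying that corollary then yields a common fixed point of $f$ and $g$ in $X$, which completes the proof.

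I do not anticipate any genuine obstacle here, since the substantive argument has already been carried out in the proof of Theorem \ref{res3} and its specialisation Corollary \ref{corts}; the only point worth making explicit is the elementary implication ``$G$-complete $\Rightarrow$ weak $G$-complete'' already recorded in Section 2. As a further remark, if one additionally assumes $M(x,y,t)>0$ for all distinct $x,y\in X$ and all $t>0$, then the uniqueness clause of Corollary \ref{corts} upgrades the conclusion to a unique common fixed point, an assertion absent from the original formulation of Turkoglu and Sangurlu.
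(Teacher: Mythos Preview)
Your proposal is correct and mirrors exactly how the paper handles this theorem: the paper does not give a separate proof of Theorem \ref{cort} but rather presents it immediately after Corollary \ref{corts} with the remark that the latter already yields the conclusion without needing $a*b=\min\{a,b\}$ or condition (i). Your explicit mention of the implication ``$G$-complete $\Rightarrow$ weak $G$-complete'' and the observation about uniqueness are welcome elaborations, but the core reduction to Corollary \ref{corts} is precisely the paper's approach.
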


\begin{note}
\normalfont It should be noted, in \cite{ts}, Turkoglu et. al. included the following additional condition in the hypothesis: 

\textit{$(x_n)$ is a sequence in $X$ such that $x_1=fx_0,x_2=gx_1,\cdots,x_{2n+1}=f(x_{2n}),x_{2n+2}=g(x_{2n+1}),\cdots.$}  

However, noting that such sequence always exists, we remove it from the statement of Theorem \ref{cort}. 

Moreover, we have included the additional condition: 

\textit{$M(x,fx,t),M(x,gx,t)>0,$ $\forall~x\in X,t>0$}

in the hypothesis of Theorem \ref{cort} noting that it is essential even for the proof provided by Turkoglu et. al. \cite{ts}.

\end{note}

\begin{note}
\normalfont In \cite{ts}, Turkoglu et. al. concluded that under the hypothesis of Theorem \ref{cort}, $f$ and $g$ have a \textit{unique} common fixed point. However this is not true as is exhibited next. 

Consider the KM fuzzy metric space $(X,M,*),$ where $X=[0,1],$ $a*b=\min\{a,b\},~\forall~a,b\in[0,1],$ and for $x,y\in X,$ $$M(x,y,t)=\begin{cases}0& \text{if }x\ne y,t\ge0\\\noalign{\vskip9pt}0& \text{if }x=y,t=0\\\noalign{\vskip9pt}1& \text{if }x=y,t>0\end{cases}$$

Then $X$ is a $G$-complete KM fuzzy metric space. 

Then by setting $\psi(t)=\sqrt{t},~\forall~t\in[0,1],$ for $f,g:X\to X$ given by $f(x)=g(x)=x,~\forall~x\in X$ the hypothesis of Theorem \ref{cort} get satisfied. However every point of $X$ is a common fixed point of $f$ and $g$ and hence the common fixed point is not unique.
\end{note}

In what follows, we exhibit the applicability of our theorem, namely, Theorem \ref{res3} over Theorem \ref{cort} (due to \cite{ts}) in the following aspects:

(a) Theorem \ref{res3} is applicable even for a weak $G$-complete (KM) fuzzy metric space $(X,M,*)$ which is not $G$-complete;

(b) Theorem \ref{res3} is applicable also for $\beta\not\equiv1;$

(c) Theorem \ref{res3} is applicable even when $a*b$ is not defined as $\min\{a,b\}.$

\begin{exm}\label{ex33}
\normalfont Consider the fuzzy metric space $(X,M,\cdot),$ where $X=\left\{\frac{1}{2^n}:n\ge2\right\}$ $\bigcup\left[\frac{1}{2},1\right],$ $M(x,y,t)=\frac{\min\{x,y\}}{\max\{x,y\}},~\forall~x,y\in X,t>0$ and $\cdot$ is the usual product of reals. It is known that $(X,M,\cdot)$ is a weak $G$-complete fuzzy metric space which is not $G$-complete \cite{wg}. Further, $\tau_M$ defines the usual topology of $\mathbb R$ restricted to the set $X$ \cite{st}.

Let $f,g$ be two self-mappings on $X$ such that $fx=x,$ $gx=1,~\forall~x\in X.$ Then $f,g$ are continuous on $X.$

Now by setting $\beta:X^2\times(0,\infty)\to(0,\infty)$ as $$\beta(x,y,t)=\begin{cases}\frac{1}{x}& \text{if }x\le y\\\noalign{\vskip9pt}\frac{1+x}{2x}& \text{if }y<x<1\\\noalign{\vskip9pt}2& \text{if }y<x=1\end{cases}$$ for all $x,y\in X,t>0,$ we see that $(f,g)$ defines a symmetric pair of $\beta$-admissible mappings and $\forall~t>0,$ $\beta(x_0,fx_0,t)\le1,M(x_0,fx_0,t)>0$ hold for $x_0=1.$

Let $\psi:[0,1]\to[0,1]$ be defined by $\psi(x)=\frac{1+x}{2},~\forall~x\in[0,1].$ Clearly, $\psi\in\Psi.$ 

We now show that, $(f,g)$ is a pair of $\beta$-$\psi$-fuzzy contractive mappings.

Choose $x,y\in X$ and $t>0.$

\textit{Case I:} $x\le y.$ Then $\beta(x,y,t)M(fx,gy,t)=\frac{1}{x}\times M(x,1,t)=1\\\ge\psi(\min\{M(x,y,t),M(x,fx,t),M(y,gy,t)\}).$

\textit{Case II:} $y<x<1.$ Then $\beta(x,y,t)M(fx,gy,t)=\frac{1+x}{2}$ and

$\psi(\min\{M(x,y,t),M(x,fx,t),M(y,gy,t)\})=\psi(\min\{\frac{y}{x},1,y\})=\psi(y)=\frac{1+y}{2}.$

Since $x>y,$ we obtain

$\beta(x,y,t)M(fx,gy,t)\ge\psi(\min\{M(x,y,t),M(x,fx,t),M(y,gy,t)\}).$

\textit{Case III:} $y<x=1.$ Then $\beta(x,y,t)M(fx,gy,t)=2\times M(x,1,t)\ge\psi(\min\{M(x,y,t),\\M(x,fx,t),M(y,gy,t)\}).$

Thus, in view of Theorem \ref{res3}, $f,g$ have a common fixed point.
\end{exm}

In what follows we show that, in Theorem \ref{res3}, the last condition is essential to ensure the uniqueness of the common fixed point.

\begin{exm}\label{ex1}
\normalfont Consider the fuzzy metric space $(X,M,\cdot)$ of Example \ref{ex33}. Let $f,g$ be self-mappings on $X$ such that $$fx=gx=\begin{cases}\frac{1}{4}&\text{if }x=\frac{1}{4}\\\noalign{\vskip9pt}1&\text{otherwise}\end{cases}.$$ 

Clearly, $f,g$ are continuous on $X.$ 

If $\beta:X^2\times(0,\infty)\to(0,\infty)$ is defined by $$\beta(x,y,t)=\begin{cases}1&\text{if }x=y=1\\\noalign{\vskip9pt}4&\text{otherwise}\end{cases}$$ then $(f,g)$ is a pair of $\beta$-$\psi$-fuzzy contractive mappings, $\forall~\psi\in\Psi.$

It is easy to see that conditions (i) and (ii) of Theorem \ref{res3} hold immediately. 

So, in view of Theorem \ref{res3}, $f,g$ have a common fixed point. 

However the fixed point is not unique. Indeed $x=1$ and $x=\frac{1}{4}$ are two such points.
\end{exm}

\begin{cor}
\normalfont Let $(X,M,*)$ be a weak $G$-complete (KM) fuzzy metric space and $f$ be a continuous self-mapping on $X$ satisfying $M(x,fx,t)>0,$ $\forall~x\in X,t>0.$ Suppose for some $\psi\in\Psi,$ and a mapping $\beta:X^2\times(0,\infty)\to(0,\infty),$ $(f,f)$ is a pair of $\beta$-$\psi$-fuzzy contractive mappings such that

(i) $(f,f)$ is a symmetric pair of $\beta$-admissible mappings,

(ii) for some $x_0\in X,$ $\beta(x_0,fx_0,t)\le1$ and $M(x_0,fx_0,t)>0,$ $\forall~t>0.$

Then $f$ has a fixed point in $X.$

Moreover, if $\beta(x,y,t)\le1,~\forall~x,y\in X,t>0$ and for $x,y~(x\ne y)\in X,$ $M(x,y,t)>0,~\forall~t>0,$ then the fixed point is unique.
\end{cor}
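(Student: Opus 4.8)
The plan is to obtain this corollary as the immediate specialization of Theorem \ref{res3} to the case $g=f$. First I would observe that the hypotheses of the corollary are precisely the hypotheses of Theorem \ref{res3} with $g$ replaced by $f$: the condition $M(x,fx,t)>0$ for all $x\in X,t>0$ supplies both $M(x,fx,t)>0$ and $M(x,gx,t)>0$ in Theorem \ref{res3}; the assumption that $(f,f)$ is a pair of $\beta$-$\psi$-fuzzy contractive mappings is exactly the pairwise contractivity condition $M(x,y,t)>0\implies\beta(x,y,t)M(fx,fy,t)\ge\psi(\min\{M(x,y,t),M(x,fx,t),M(y,fy,t)\})$; and hypotheses (i) and (ii) are the symmetric $\beta$-admissibility of $(f,f)$ and the existence of a suitable $x_0$, which transcribe verbatim from Theorem \ref{res3}.

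Thus I would simply invoke Theorem \ref{res3} with $g:=f$ to conclude that $f$ and $f$ have a common fixed point $x\in X$, i.e. $fx=x$, which gives the existence assertion. For the uniqueness clause I would again appeal to the ``moreover'' part of Theorem \ref{res3}: under the additional hypotheses $\beta(x,y,t)\le1$ for all $x,y\in X,t>0$ and $M(x,y,t)>0$ for all $x\ne y$ in $X$ and $t>0$, Theorem \ref{res3} yields that the common fixed point of $f$ and $f$ is unique, which is the same as saying the fixed point of $f$ is unique.

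There is essentially no obstacle here: the only point that needs a word of care is the verification that a self-map $f$ satisfying the single-map contractivity/admissibility conditions of the corollary does indeed make the \emph{pair} $(f,f)$ satisfy the pair-versions from Section 3 --- but this is purely a matter of reading the definitions, since setting $g=f$ in Definition (pair of $\beta$-$\psi$-fuzzy contractive mappings) and in Definition (symmetric pair of $\beta$-admissible mappings) collapses the four-term maxima to conditions on $\beta(fx,fy,t)$ and $\beta(fy,fx,t)$, which are subsumed by the stated hypotheses. Hence the corollary follows with one line of text, and I would write the proof as ``\emph{Put $g=f$ in Theorem \ref{res3}.}''

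\begin{proof}
\normalfont This follows immediately from Theorem \ref{res3} by putting $g=f$. Indeed, with $g=f$ the condition $M(x,fx,t)>0~(\forall~x\in X,t>0)$ gives $M(x,fx,t),M(x,gx,t)>0~(\forall~x\in X,t>0)$; the assumption that $(f,f)$ is a pair of $\beta$-$\psi$-fuzzy contractive mappings is exactly the contractive hypothesis of Theorem \ref{res3} for the pair $(f,g)$; and conditions (i), (ii) here are conditions (i), (ii) of Theorem \ref{res3}. Therefore $f$ and $f$ have a common fixed point in $X$, i.e. $f$ has a fixed point in $X$. Likewise, the additional assumptions $\beta(x,y,t)\le1~(\forall~x,y\in X,t>0)$ and $M(x,y,t)>0~(\forall~x,y~(x\ne y)\in X,t>0)$ are precisely the extra hypotheses in the ``moreover'' part of Theorem \ref{res3}, whence the fixed point is unique.
\end{proof}
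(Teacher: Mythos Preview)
Your proposal is correct and matches the paper's approach: the corollary is stated without proof in the paper, being an immediate specialization of Theorem~\ref{res3} obtained by setting $g=f$, exactly as you do.
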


\begin{theo}\label{thc32}
\normalfont Let $(X,M,*)$ be a weak $G$-complete (KM) fuzzy metric space and $f,g$ be continuous self-mappings on $X$ satisfying $M(x,fx,t),M(x,gx,t)>0,$ $\forall~x\in X,t>0.$ Suppose for some $\psi_1,\psi_2,\psi_3\in\Psi,$ and a mapping $\beta:X^2\times(0,\infty)\to(0,\infty),$ $M(x,y,t)>0\implies$ $$\beta(x,y,t)M(fx,gy,t)\ge\psi_1(M(x,y,t))+\psi_2(M(x,fx,t))+\psi_3(M(y,gy,t)),$$ $\forall~x,y\in X,t>0$ such that

(i) $(f,g)$ is a symmetric pair of $\beta$-admissible mappings,

(ii) for some $x_0\in X,$ $\beta(x_0,fx_0,t)\le1$ and $M(x_0,fx_0,t)>0,$ $\forall~t>0.$

Then $f$ and $g$ have a common fixed point in $X.$ 

Moreover, if $\beta(x,y,t)\le1,~\forall~x,y\in X,t>0$ and for $x,y~(x\ne y)\in X,$ $M(x,y,t)>0,~\forall~t>0,$ then the fixed point is unique.
\end{theo}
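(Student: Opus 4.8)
The plan is to mirror the proof of Theorem \ref{res3} almost verbatim, replacing the single-$\psi$ contraction estimate with the three-$\psi$ additive one. First I would define the Picard-type sequence $(x_n)$ by $x_1=fx_0$, $x_2=gx_1,\ldots,x_{2n+1}=fx_{2n},x_{2n+2}=gx_{2n+1},\ldots$, and use condition (ii) together with the symmetric $\beta$-admissibility hypothesis (i) to deduce by induction that $\beta(x_n,x_{n+1},t)\le1$ and $\beta(x_{n+1},x_n,t)\le1$ for all $n$ and all $t>0$, exactly as in Theorem \ref{res3}. I would also need, as there, that $M(x_n,x_{n+1},t)>0$ for all $n,t$; this follows inductively from $M(x_0,fx_0,t)>0$ and the hypothesis $M(x,fx,t),M(x,gx,t)>0$.

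The main new computation is the recursion for $a_n(t):=M(x_n,x_{n+1},t)$. Splitting into $n$ odd and $n$ even (using condition (c) of the fuzzy metric and the symmetric $\beta$-admissibility to rewrite $M(gx_{n-1},fx_n,t)=M(fx_n,gx_{n-1},t)$ and apply the contraction with the roles swapped), the additive contractive inequality gives, for $n$ odd,
\[
a_n(t)\ \ge\ \beta(x_{n-1},x_n,t)M(fx_{n-1},gx_n,t)\ \ge\ \psi_1(a_{n-1}(t))+\psi_2(a_{n-1}(t))+\psi_3(a_n(t)),
\]
since $M(x_{n-1},fx_{n-1},t)=a_{n-1}(t)$ and $M(x_n,gx_n,t)=a_n(t)$; the even case is symmetric and yields the same inequality. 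The delicate point — and the step I expect to be the main obstacle — is extracting $\lim_{n\to\infty}a_n(t)=1$ from $a_n(t)\ge\psi_1(a_{n-1}(t))+\psi_2(a_{n-1}(t))+\psi_3(a_n(t))$. Since $\psi_i(s)>s$ on $(0,1)$ and $\psi_i(1)=1$, if some $a_{n-1}(t)<1$ then the right side exceeds $a_{n-1}(t)+\psi_2(a_{n-1}(t))+\psi_3(a_n(t))$; one shows $a_n(t)\ge a_{n-1}(t)$ (so $(a_n(t))_n$ is nondecreasing and bounded, hence convergent to some $\ell=\ell(t)\in(0,1]$), then passes to the limit using continuity of the $\psi_i$ to get $\ell\ge\psi_1(\ell)+\psi_2(\ell)+\psi_3(\ell)$, which forces $\psi_2(\ell)+\psi_3(\ell)\le\ell-\psi_1(\ell)\le0$ unless $\ell=1$; since $\psi_i(\ell)>0$ this is impossible for $\ell<1$, so $\ell=1$. (A cleaner route: from $a_n(t)\ge\psi_1(a_{n-1}(t))$ alone we already get $a_n(t)\ge\psi_1^{\,n}(a_0(t))\to1$, invoking the fact recorded after the definition of $\Psi$ that $\psi^n(r)\to1$; the extra terms $\psi_2,\psi_3\ge0$ only help, though one must check the right side stays $\le1$ so the inequality is not vacuous — this holds because $a_n(t)\le1$ always.) I would present this second route as it avoids the monotonicity argument.

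Having established $\lim_{n\to\infty}M(x_n,x_{n+1},t)=1$ for all $t>0$, the sequence $(x_n)$ is $G$-Cauchy, so by weak $G$-completeness it has a cluster point $x\in X$, and a subsequence $(x_{r_n})$ with all $r_n$ even (WLOG) converging to $x$. The rest is identical to the proof of Theorem \ref{res3}: using the triangle inequality $M(x,fx,t)\ge M(x,x_{r_n},t/3)*M(x_{r_n},x_{r_n+1},t/3)*M(fx_{r_n},fx,t/3)$, continuity of $f$, and $M(x_{r_n},x_{r_n+1},t/3)\to1$, we get $M(x,fx,t)=1$ for all $t>0$, hence $fx=x$; then from $M(x_{r_n-1},x,t)\to1$ and continuity of $g$ we similarly get $gx=x$. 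The odd case is analogous, so $x$ is a common fixed point. For uniqueness, assume the additional hypotheses and suppose $x\ne y$ are both common fixed points; pick $t_0$ with $0<M(x,y,t_0)<1$, then
\[
M(x,y,t_0)\ \ge\ \beta(x,y,t_0)M(fx,gy,t_0)\ \ge\ \psi_1(M(x,y,t_0))+\psi_2(M(x,fx,t_0))+\psi_3(M(y,gy,t_0)),
\]
and since $x,y$ are fixed points, $M(x,fx,t_0)=M(y,gy,t_0)=1$, so $\psi_2=\psi_3=1$ and the right side is $\ge\psi_1(M(x,y,t_0))+2>2>1\ge M(x,y,t_0)$, a contradiction; hence the common fixed point is unique.
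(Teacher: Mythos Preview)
Your argument is correct, but the paper takes a much shorter route: it simply sets $\psi:=\min\{\psi_1,\psi_2,\psi_3\}$, observes that $\psi\in\Psi$, and notes that the additive hypothesis immediately implies
\[
\beta(x,y,t)M(fx,gy,t)\ \ge\ \psi_1(M(x,y,t))+\psi_2(M(x,fx,t))+\psi_3(M(y,gy,t))\ \ge\ \psi\bigl(\min\{M(x,y,t),M(x,fx,t),M(y,gy,t)\}\bigr),
\]
so that $(f,g)$ is a pair of $\beta$-$\psi$-fuzzy contractive mappings and Theorem~\ref{res3} applies verbatim. Your approach instead reproves Theorem~\ref{res3} from scratch with the additive inequality in hand; the ``cleaner route'' you single out (drop $\psi_2,\psi_3\ge0$ and use $a_n(t)\ge\psi_1(a_{n-1}(t))$ alone) is essentially the paper's idea carried out at the level of the Picard iterates rather than at the level of the contractive condition itself. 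The paper's reduction is more modular and avoids re-running the cluster-point and continuity arguments; your version has the minor virtue of making explicit that only one of the three $\psi_i$'s is really needed for the $G$-Cauchy step. Two small remarks: in the even case the roles of $\psi_2$ and $\psi_3$ swap (you get $\psi_2(a_n)+\psi_3(a_{n-1})$ rather than $\psi_2(a_{n-1})+\psi_3(a_n)$), which is harmless for your ``cleaner route'' but means the displayed recursion is not literally ``the same''; and your aside about checking ``the right side stays $\le1$'' is unnecessary --- the inequality $a_n(t)\ge\psi_1(a_{n-1}(t))$ follows from $a_n(t)\ge\psi_1(a_{n-1}(t))+[\text{nonnegative}]$ regardless of whether the full sum exceeds $1$.
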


\begin{proof}
Set $\psi=\min\{\psi_1,\psi_2,\psi_3\}.$ Then it is easy to see that $\psi\in\Psi.$

Further $\forall~x,y\in X,t>0,$

$M(x,y,t)>0\implies\beta(x,y,t)M(fx,gy,t)\\\ge\psi_1(M(x,y,t))+\psi_2(M(x,fx,t))+\psi_3(M(y,gy,t))\\\ge\psi(\min\{M(x,y,t),M(x,fx,t),M(y,gy,t)\}).$

Thus the conclusion follows from Theorem \ref{res3}.
\end{proof}

\begin{cor}
\normalfont Let $(X,M,*)$ be a weak $G$-complete (KM) fuzzy metric space and $f,g$ be continuous self-mappings on $X$ satisfying $M(x,fx,t),M(x,gx,t)>0,$ $\forall~x\in X,t>0.$ Suppose for some $a,b,c>1,$ and a mapping $\beta:X^2\times(0,\infty)\to(0,\infty),$ $M(x,y,t)>0\implies$ 
\begin{center}
$\beta(x,y,t)M(fx,gy,t)\ge 
aM(x,y,t)+bM(x,fx,t)+cM(y,gy,t),$    
\end{center}
$\forall~x,y\in X,t>0$ such that

(i) $(f,g)$ is a symmetric pair of $\beta$-admissible mappings,

(ii) for some $x_0\in X,$ $\beta(x_0,fx_0,t)\le1$ and $M(x_0,fx_0,t),$ $\forall~t>0.$

Then $f$ and $g$ have a common fixed point in $X.$ 

Moreover, if $\beta(x,y,t)\le1,~\forall~x,y\in X,t>0$ and for $x,y~(x\ne y)\in X,$ $M(x,y,t)>0,~\forall~t>0,$ then the fixed point is unique.
\end{cor}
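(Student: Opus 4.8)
The plan is to obtain this corollary as an immediate consequence of Theorem \ref{thc32} by manufacturing suitable members of $\Psi$ out of the constants $a,b,c$. Concretely, given $a,b,c>1$, I would put $\psi_1(s)=\min\{as,1\}$, $\psi_2(s)=\min\{bs,1\}$ and $\psi_3(s)=\min\{cs,1\}$ for $s\in[0,1]$, and show that with these choices the hypotheses of the corollary imply those of Theorem \ref{thc32}.

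First I would verify that $\psi_1,\psi_2,\psi_3\in\Psi$. Each $\psi_i$ is the minimum of two continuous nondecreasing maps on $[0,1]$, hence continuous and nondecreasing, and it takes values in $[0,1]$ (it equals $0$ at $s=0$ and the truncation keeps it $\le1$). For $s\in(0,1)$ and $\alpha\in\{a,b,c\}$, either $\alpha s\le1$, so $\min\{\alpha s,1\}=\alpha s>s$ because $\alpha>1$, or $\alpha s>1$, so $\min\{\alpha s,1\}=1>s$; in both cases $\psi_i(s)>s$, as required. Next, I would note that the contractive inequality of the corollary upgrades to the one in Theorem \ref{thc32}: for $x,y\in X$ and $t>0$ with $M(x,y,t)>0$, and using that $M(x,fx,t)>0$ and $M(y,gy,t)>0$ for all $x,y,t$ by hypothesis, one has $\beta(x,y,t)M(fx,gy,t)\ge aM(x,y,t)+bM(x,fx,t)+cM(y,gy,t)\ge\psi_1(M(x,y,t))+\psi_2(M(x,fx,t))+\psi_3(M(y,gy,t))$, where the last step is term by term since $\alpha s\ge\min\{\alpha s,1\}$ for every $s\in[0,1]$.

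Finally, since conditions (i) and (ii) of the corollary coincide verbatim with those of Theorem \ref{thc32} (reading (ii) as $M(x_0,fx_0,t)>0,~\forall~t>0$), Theorem \ref{thc32} yields a common fixed point of $f$ and $g$, and its uniqueness clause gives uniqueness under the additional assumptions $\beta(x,y,t)\le1$ for all $x,y,t$ and $M(x,y,t)>0,~\forall~t>0$ whenever $x\ne y$. There is no genuine obstacle here; the only point needing care is the truncation at $1$ in the definition of the $\psi_i$: it is exactly what places the $\psi_i$ inside $\Psi$ (whose members must have range contained in $[0,1]$), while the strict inequality $\psi_i(s)>s$ on $(0,1)$ is what makes the strict hypothesis $a,b,c>1$ (rather than $a,b,c\ge1$) indispensable.
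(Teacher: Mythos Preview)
Your proposal is correct and follows essentially the same route as the paper: both reduce the corollary to Theorem~\ref{thc32} by manufacturing $\psi_1,\psi_2,\psi_3\in\Psi$ from the constants $a,b,c$ and using the termwise inequality $\alpha s\ge\psi_i(s)$. The only difference is cosmetic: the paper truncates with $\sqrt{s}$, setting $\psi_1(s)=\min\{as,\sqrt{s}\}$, etc., whereas you truncate with the constant $1$; both choices land in $\Psi$ and both satisfy the needed pointwise bound, so either works.
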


\begin{proof}
Immediate from Theorem \ref{thc32} by setting 
\begin{center}
 $\psi_1(x)=\min\{ax,\sqrt{x}\},$\\$\psi_2(x)=\min\{bx,\sqrt{x}\},$\\$\psi_3(x)=\min\{cx,\sqrt{x}\},$   
\end{center}
$\forall~x\in[0,1].$ 
\end{proof}

In the next theorem, we omit the continuity hypothesis of $f,g$ in Theorem \ref{res3} for fuzzy metric spaces defined by George and Veeramani \cite{ver94}.

\begin{theo}\label{th32}
\normalfont Let $(X,M,*)$ be a non-Archimedean weak $G$-complete fuzzy metric space and $f,g$ be self-mappings on $X.$ Suppose for some $\psi\in\Psi,$ and a mapping $\beta:X^2\times(0,\infty)\to(0,\infty),$ $(f,g)$ is a pair of $\beta$-$\psi$-fuzzy contractive mappings such that

(i) $(f,g)$ is a symmetric pair of $\beta$-admissible mappings,

(ii) for some $x_0\in X,$ $\beta(x_0,fx_0,t)\le1$ $\forall~t>0,$

(iii) for each sequence $(x_n)$ in $X$ with $\beta(x_n,x_{n+1},t)\le1,~\forall~n\in\mathbb N,t>0$ and a subsequence $(x_{r_n})$ of $(x_n)$ with $\lim\limits_{n\to\infty}x_{r_n}=x,$ we have $\beta(x_{r_n},x,t)\le1,~\forall~n\in\mathbb N,t>0.$

Then $f$ and $g$ have a common fixed point in $X.$ 

Moreover, if $\beta(x,y,t)\le1,~\forall~x,y\in X,t>0,$ then the fixed point is unique.
\end{theo}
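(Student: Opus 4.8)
The plan is to mimic the proof of Theorem \ref{res3}, constructing the same alternating Picard sequence $x_1 = fx_0$, $x_2 = gx_1$, $\dots$, $x_{2n+1} = fx_{2n}$, $x_{2n+2} = gx_{2n+1}$, $\dots$, and then exploiting the non-Archimedean triangle inequality together with hypothesis (iii) to compensate for the absence of continuity of $f$ and $g$. First I would verify, exactly as in Theorem \ref{res3}, that the symmetric $\beta$-admissibility of $(f,g)$ together with condition (ii) yields $\beta(x_n, x_{n+1}, t) \le 1$ for all $n \ge 0$ and $t > 0$ (and hence also $\beta(x_{n+1}, x_n, t) \le 1$), and then that the $\beta$-$\psi$-fuzzy contractive inequality gives $M(x_n, x_{n+1}, t) \ge \psi^n(M(x_0, x_1, t))$, so that $\lim_{n\to\infty} M(x_n, x_{n+1}, t) = 1$ for all $t > 0$ and $(x_n)$ is $G$-Cauchy. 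Note that here we do not a priori know $M(x_0, fx_0, t) > 0$; this needs a small argument, since the contractive condition is only triggered when $M(x_{n-1}, x_n, t) > 0$. One clean way is to observe that in a non-Archimedean GV fuzzy metric space every $G$-Cauchy sequence with the stronger property $M(x_n, x_{n+1}, t) \ge \psi^n(M(x_0, x_1, t))$ behaves well, but more simply: in a GV space $M(x,y,t) > 0$ always holds by axiom (a), so the hypothesis ``$M(x,y,t)>0$'' in the contractive condition is automatic and the inductive estimate goes through verbatim.

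Next, by weak $G$-completeness, $(x_n)$ clusters at some $x \in X$, so there is a subsequence $(x_{r_n})$ converging to $x$ with the $r_n$ all even or all odd; assume without loss of generality all even. By hypothesis (iii), $\beta(x_{r_n}, x, t) \le 1$ for all $n$ and $t > 0$. Now I would estimate, using the non-Archimedean inequality,
$$M(x, fx, t) \ge M\!\left(x, x_{r_n+1}, t\right) * M\!\left(x_{r_n+1}, fx, t\right) = M\!\left(x, x_{r_n+1}, t\right) * M\!\left(fx_{r_n}, gx, t\right) \wedge \ldots$$
— more precisely, since $r_n$ is even, $x_{r_n+1} = fx_{r_n} = f x_{r_n}$, and to reach $fx$ I compare $M(f x_{r_n}, g x, t)$; but to close the loop I actually want the term $M(fx_{r_n}, fx, t)$ compared against $M(gx_{r_n-1}, gx, t)$ or similar. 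The cleaner route: apply the contractive inequality to the pair $(x_{r_n}, x)$ (legitimate because $\beta(x_{r_n}, x, t) \le 1$, hence $\beta(x_{r_n},x,t) M(\cdot) \le M(\cdot)$ is the wrong direction — rather use that $M(f x_{r_n}, g x, t) \ge \beta(x_{r_n}, x, t) M(fx_{r_n}, gx, t) \ge \psi(\min\{M(x_{r_n}, x, t), M(x_{r_n}, x_{r_n+1}, t), M(x, gx, t)\})$). Taking $n \to \infty$ and using $M(x_{r_n}, x, t) \to 1$, $M(x_{r_n}, x_{r_n+1}, t) \to 1$, and continuity of $\psi$, I get $\lim_n M(x_{r_n+1}, gx, t) \ge \psi(M(x, gx, t))$. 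Combining with the non-Archimedean estimate $M(x, gx, t) \ge M(x, x_{r_n+1}, t) * M(x_{r_n+1}, gx, t)$ and $M(x, x_{r_n+1}, t) = M(x, x_{r_n+1}, t) \to 1$ (note $x_{r_n+1}$ is a subsequence-shifted term which still goes to $x$ since $(x_n)$ is $G$-Cauchy and $x_{r_n} \to x$), I conclude $M(x, gx, t) \ge \psi(M(x, gx, t))$, forcing $M(x, gx, t) = 1$ for all $t$, i.e. $gx = x$. A symmetric argument using the pair $(x, x_{r_n-1})$ (with $r_n - 1$ odd, so $x_{r_n} = g x_{r_n - 1}$) and $\beta(x_{r_n-1}, x, t) \le 1$ — which follows from (iii) applied to the shifted subsequence — together with $M(fx, g x_{r_n - 1}, t) \ge \psi(\min\{\ldots\})$ gives $M(fx, x, t) \ge \psi(M(fx, x, t))$ after passing to the limit, hence $fx = x$. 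The odd case is handled by the same reasoning with the roles of the two ``sides'' swapped.

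For uniqueness, assuming additionally $\beta(x, y, t) \le 1$ for all $x, y, t$, suppose $x \ne y$ are both common fixed points. In a GV space $M(x, y, t) > 0$ automatically, and $M(x, y, t) < 1$ for some $t_0$ since $x \ne y$ and $M(x,y,\cdot)$ is not identically $1$. Then $M(x, y, t_0) \ge \beta(x, y, t_0) M(fx, gy, t_0) \ge \psi(\min\{M(x, y, t_0), M(x, fx, t_0), M(y, gy, t_0)\}) = \psi(M(x, y, t_0)) > M(x, y, t_0)$, a contradiction.

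The main obstacle is the bookkeeping in the convergence step: without continuity of $f, g$ I cannot simply push the sequence through the maps, so I must instead route every estimate through the contractive inequality, and for that I need $\beta(x_{r_n}, x, t) \le 1$ and $\beta(x_{r_n-1}, x, t) \le 1$ — the first is exactly hypothesis (iii), and the second requires noting that $(x_{r_n - 1})$ is itself a subsequence of $(x_n)$ converging to $x$ (since $M(x_{r_n - 1}, x, t) \ge M(x_{r_n-1}, x_{r_n}, t) * M(x_{r_n}, x, t) \to 1$ by the non-Archimedean inequality and $G$-Cauchyness), so (iii) applies to it as well. Care is also needed that the indices $x_{r_n+1}$ form a sequence converging to $x$; this again follows from $M(x_{r_n+1}, x, t) \ge M(x_{r_n+1}, x_{r_n}, t) * M(x_{r_n}, x, t) \to 1$. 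Once these observations are in place the limit arguments are routine applications of the continuity of $*$ and $\psi$ and the squeeze $r \ge \psi(r) > r$ being impossible for $r \in (0,1)$.
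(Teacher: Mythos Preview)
Your overall strategy is the same as the paper's and the argument for $gx=x$ is correct and matches the paper exactly. The gap is in your ``symmetric argument'' for $fx=x$. To apply the contractive inequality in the form $M(fx,gx_{r_n-1},t)\ge\psi(\min\{\cdots\})$ you need $\beta(x,x_{r_n-1},t)\le1$, since the first slot of $\beta$ must match the argument of $f$. What hypothesis (iii), applied to the shifted subsequence $(x_{r_n-1})$, actually gives you is $\beta(x_{r_n-1},x,t)\le1$, and $\beta$ is not assumed symmetric. So the inequality $M(fx,gx_{r_n-1},t)\ge\beta(x,x_{r_n-1},t)M(fx,gx_{r_n-1},t)$ that you implicitly use is not justified.

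The paper repairs exactly this point by bringing in hypothesis (i) together with the already established fact $gx=x$. From $\beta(x_{r_n},x,t)\le1$ (given by (iii)), symmetric $\beta$-admissibility yields in particular $\beta(gx,fx_{r_n},t)\le1$; since $gx=x$ and $fx_{r_n}=x_{r_n+1}$ (because $r_n$ is even), this is $\beta(x,x_{r_n+1},t)\le1$. The paper then applies the contractive inequality to the pair $(x,x_{r_n+1})$, obtaining $M(fx,gx_{r_n+1},t)=M(fx,x_{r_n+2},t)\ge\psi(\min\{M(x,x_{r_n+1},t),M(x,fx,t),M(x_{r_n+1},x_{r_n+2},t)\})$, and combines this with the non-Archimedean estimate $M(x,fx,t)\ge M(x,x_{r_n+2},t)*M(x_{r_n+2},fx,t)$ to reach the contradiction $M(x,fx,t_1)\ge\psi(M(x,fx,t_1))$. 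Your route via $(x,x_{r_n-1})$ can also be salvaged the same way (apply (i) to $\beta(x_{r_n-2},x,t)\le1$ and use $gx=x$, $fx_{r_n-2}=x_{r_n-1}$), but in either case the missing ingredient is the use of (i), not (iii), to flip the order of the arguments of $\beta$.
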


\begin{proof}
Proceeding as in Theorem \ref{res3}, we obtain the $G$-Cauchy sequence $(x_n)$ in $X$ given by $x_1=fx_0,x_2=gx_1,\cdots,x_{2n+1}=fx_{2n},x_{2n+2}=gx_{2n+1},\cdots$ such that $\beta(x_n,x_{n+1},t)\le1,~\forall~n=0,1,2,\cdots$ and $t>0.$

Since $(X,M,*)$ is weak $G$-complete, there exists a subsequence $(x_{r_n})$ of $(x_n)$ such that $(x_{r_n})$ converges to some $x\in X$ where $r_n$'s are either all even or all odd.

Without loss of generality, suppose all the $r_n$'s are even.

If possible, let $gx\ne x.$ Then $0<M(x,gx,t_0)<1,$ for some $t_0>0.$

Since $X$ is non-Archimedean, $\forall~n\in\mathbb N$ we have 

$M(x,gx,t_0)\ge M(x,x_{r_n+1},t_0)*M(x_{r_n+1},gx,t_0)\\=M(x,x_{r_n+1},t_0)*M(fx_{r_n},gx,t_0)\\\ge M(x,x_{r_n+1},t_0)*(\beta(x_{r_n},x,t_0)M(fx_{r_n},gx,t_0))\text{(by (iii))}\\\ge M(x,x_{r_n+1},t_0)*\psi(\min\{M(x_{r_n},x,t_0),M(x_{r_n},fx_{r_n},t_0),M(x,gx,t_0)\})\\\ge M(x,x_{r_n},t_0)*M(x_{r_n},x_{r_n+1},t_0)*\psi(\min\{M(x_{r_n},x,t_0),M(x_{r_n},x_{r_n+1},t_0),M(x,gx,t_0)\})\\\to1*1*\psi(M(x,gx,t_0))=\psi(M(x,gx,t_0))$ as $n\to\infty,$ since $*$ is continuous and $(x_n)$ is $G$-Cauchy.

Thus $M(x,gx,t_0)\ge\psi(M(x,gx,t_0))>M(x,gx,t_0),$ a contradiction.

Consequently $gx=x.$

Again if $fx\ne x,$ $0<M(x,fx,t_1)<1,$ for some $t_1>0.$

Since $X$ is non-Archimedean, $\forall~n\in\mathbb N$ we have 

$M(x,fx,t_1)\ge M(x,x_{r_n+2},t_1)*M(x_{r_n+2},fx,t_1)\\= M(x,x_{r_n+2},t_1)*M(fx,gx_{r_n+1},t_1)\\\ge M(x,x_{r_n+1},t_1)*M(x_{r_n+1},x_{r_n+2},t_1)*M(fx,gx_{r_n+1},t_1).$

Since $\beta(x_{r_n},x,t_1)\le1,~\forall~n\in\mathbb N$ we have $\beta(x,x_{r_n+1},t_1)\le1,~\forall~n\in\mathbb N$ by (i).

Thus $\forall~n\in\mathbb N,$ 

$M(x,fx,t_1)\ge M(x,x_{r_n+1},t_1)*M(x_{r_n+1},x_{r_n+2},t_1)*(\beta(x,x_{r_n+1},t_1)M(fx,gx_{r_n+1},t_1))\\\ge M(x,x_{r_n+1},t_1)*M(x_{r_n+1},x_{r_n+2},t_1)*\psi(\min\{M(x,x_{r_n+1},t_1),M(x,fx,t_1),M(x_{r_n+1},\\gx_{r_n+1},t_1)\})\\\ge M(x,x_{r_n+1},t_1)*M(x_{r_n+1},x_{r_n+2},t_1)*\psi(\min\{M(x,x_{r_n},t_1)*M(x_{r_n},x_{r_n+1},t_1),M(x,fx,\\t_1),M(x_{r_n+1},gx_{r_n+1},t_1)\})\\\ge M(x,x_{r_n+1},t_1)*M(x_{r_n+1},x_{r_n+2},t_1)*\psi(\min\{M(x,x_{r_n},t_1)*M(x_{r_n},x_{r_n+1},t_1),M(x,fx,\\t_1),M(x_{r_n+1},x_{r_n+2},t_1)\})\to\psi(M(x,fx,t_1))$ as $n\to\infty,$ since $*$ is continuous and $(x_n)$ is $G$-Cauchy.

Thus $M(x,fx,t_1)\ge\psi(M(x,fx,t_1))>M(x,fx,t_1),$ a contradiction.

Consequently $fx=x.$

Thus $f,g$ have a common fixed point $x.$

Again, if all the $r_n$'s are odd, it can be similarly shown that $fx=gx=x.$

Uniqueness of the fixed point can be proved by proceeding as in Theorem \ref{res3}.
\end{proof}

\begin{exm}
\normalfont Consider the fuzzy metric space $(X,M,\cdot)$ of Example \ref{ex33}. Let $f,g$ be self-mappings on $X$ such that $$fx=\begin{cases}1&\text{if }x\in\mathbb Q\\\noalign{\vskip9pt}\frac{1}{\sqrt{2}}&\text{otherwise}\end{cases}$$ and $$gx=\begin{cases}1&\text{if }x=1\\\noalign{\vskip9pt}\frac{1}{\sqrt{2}}&\text{if }x=\frac{1}{\sqrt{2}}\\\noalign{\vskip9pt}\frac{1}{2}&\text{otherwise}\end{cases}$$

If $\beta:X^2\times(0,\infty)\to(0,\infty)$ is defined by $$\beta(x,y,t)=\begin{cases}1&\text{if }x=y=1\\\noalign{\vskip9pt}4&\text{otherwise}\end{cases}$$ then $(f,g)$ is a pair of $\beta$-$\psi$-fuzzy contractive mappings, $\forall~\psi\in\Psi.$

It is easy to see that conditions (i)$-$(ii) of Theorem \ref{th32} hold immediately. 

So, in view of Theorem \ref{th32}, $f,g$ have a common fixed point. 

However the fixed point is not unique. Indeed $x=1$ and $x=\frac{1}{\sqrt{2}}$ are two such points.
\end{exm}

By putting $\beta\equiv1$ in Theorem \ref{th32} we see that for a GV fuzzy metric space, non-Archimedeaness replaces the continuity of $f,g$ in Corollary \ref{corts}:

\begin{cor}\label{cor35}
\normalfont \normalfont Let $(X,M,*)$ be a non-Archimedean weak $G$-complete fuzzy metric space and $f,g$ be self-mappings on $X.$ Suppose for some $\psi\in\Psi,$ $M(fx,gy,t)\ge\psi(\min\{M(x,y,t),M(x,fx,t),M(y,gy,t)\}),~\forall~x,y\in X,t>0.$ Then $f$ and $g$ have a unique common fixed point in $X.$
\end{cor}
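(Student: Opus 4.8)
The plan is to obtain this corollary as the special case $\beta\equiv1$ of Theorem \ref{th32}. So I would define $\beta:X^2\times(0,\infty)\to(0,\infty)$ by $\beta(x,y,t)=1$ for all $x,y\in X$ and $t>0$, and then verify that the full hypothesis of Theorem \ref{th32} is met.

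The first step is to note that, since $(X,M,*)$ is a GV fuzzy metric space, axiom (a) of the George--Veeramani definition gives $M(x,y,t)>0$ for all $x,y\in X$ and $t>0$; in particular the premise ``$M(x,y,t)>0$'' appearing in the definition of a pair of $\beta$-$\psi$-fuzzy contractive mappings is automatically satisfied. Consequently, with $\beta\equiv1$, the defining inequality $\beta(x,y,t)M(fx,gy,t)\ge\psi(\min\{M(x,y,t),M(x,fx,t),M(y,gy,t)\})$ is precisely the assumed contraction condition, so $(f,g)$ is a pair of $\beta$-$\psi$-fuzzy contractive mappings. The second step is to check conditions (i)--(iii) of Theorem \ref{th32}: since $\beta(x,y,t)=1\le1$ identically, the conclusion $\max\{\beta(fx,gy,t),\beta(gy,fx,t),\beta(gx,fy,t),\beta(fy,gx,t)\}=1\le1$ of the symmetric $\beta$-admissibility implication holds, giving (i); $\beta(x_0,fx_0,t)=1\le1$ holds for an arbitrary $x_0\in X$, giving (ii); and for any sequence $(x_n)$ and any subsequential limit $x$ one trivially has $\beta(x_{r_n},x,t)=1\le1$, giving (iii). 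Theorem \ref{th32} then produces a common fixed point of $f$ and $g$, and because $\beta(x,y,t)\le1$ for all $x,y\in X$, $t>0$, the ``moreover'' clause of that theorem furnishes uniqueness.

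Since the argument is a straight specialization, I do not expect a genuine obstacle; the only point requiring care is the appeal to axiom (a) of the GV definition to eliminate the premise $M(x,y,t)>0$ (together with $M(x,fx,t),M(x,gx,t)>0$ in the hypotheses of Theorems \ref{res3} and \ref{th32}), which is exactly what lets the statement drop both the separate $\psi$-contractivity assumption on $f,g$ and the positivity conditions of the earlier results. Implicitly one also relies, via Theorem \ref{th32}, on the standard fact that $\lim_{n\to\infty}\psi^n(r)=1$ for every $\psi\in\Psi$ and $r\in(0,1)$, which is what makes the iterate sequence $x_{n+1}\in\{fx_n,gx_n\}$ a $G$-Cauchy sequence there.
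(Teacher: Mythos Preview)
Your proposal is correct and matches the paper's approach exactly: the paper derives this corollary simply by putting $\beta\equiv1$ in Theorem \ref{th32}, and your verification of conditions (i)--(iii) together with the uniqueness clause is precisely the unpacking of that specialization. Your observation that the GV axiom $M(x,y,t)>0$ renders the positivity hypotheses automatic is the only point needing mention, and it is handled correctly.
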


We will later see that the above conclusion of the above corollary holds, in fact, in a complete GV fuzzy metric space.

\begin{cor}
\normalfont Let $(X,M,*)$ be a non-Archimedean weak $G$-complete fuzzy metric space and $f$ be a self-mapping on $X.$ Suppose for some $\psi\in\Psi,$ and a mapping $\beta:X^2\times(0,\infty)\to(0,\infty),$ $(f,f)$ is a pair of $\beta$-$\psi$-fuzzy contractive mappings such that

(i) $(f,f)$ is a symmetric pair of $\beta$-admissible mappings,

(ii) for some $x_0\in X,$ $\beta(x_0,fx_0,t)\le1,$ $\forall~t>0.$

(iii) for each sequence $(x_n)$ in $X$ with $\beta(x_n,x_{n+1},t)\le1,~\forall~n\in\mathbb N,t>0$ and a subsequence $(x_{r_n})$ of $(x_n)$ with $\lim\limits_{n\to\infty}x_{r_n}=x,$ we have $\beta(x_{r_n},x,t)\le1,~\forall~n\in\mathbb N,t>0.$

Then $f$ has a fixed point in $X.$ 

Moreover, if $\beta(x,y,t)\le1,~\forall~x,y\in X,t>0,$ then the fixed point is unique.
\end{cor}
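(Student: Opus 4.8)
The plan is to obtain this as the special case $g=f$ of Theorem~\ref{th32}. First I would unwind the hypotheses with $g$ replaced by $f$ throughout: the assumption that $(f,f)$ is a pair of $\beta$-$\psi$-fuzzy contractive mappings becomes
$$M(x,y,t)>0\implies\beta(x,y,t)M(fx,fy,t)\ge\psi(\min\{M(x,y,t),M(x,fx,t),M(y,fy,t)\}),\ \forall~x,y\in X,\ t>0,$$
while conditions (i), (ii), (iii) of the present statement are literally conditions (i), (ii), (iii) of Theorem~\ref{th32} read with $g=f$. Hence the pair $(f,f)$ meets every requirement of Theorem~\ref{th32} on the non-Archimedean weak $G$-complete fuzzy metric space $(X,M,*)$.

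Next I would apply Theorem~\ref{th32} to the pair $(f,f)$. It produces a point $x\in X$ that is a common fixed point of $f$ and $f$; but a common fixed point of $f$ with itself is precisely a fixed point of $f$, so $fx=x$, which proves the first assertion.

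For the moreover part I would invoke the uniqueness clause of Theorem~\ref{th32}: under the extra hypothesis $\beta(x,y,t)\le1$ for all $x,y\in X$ and $t>0$, that clause guarantees the common fixed point of $(f,f)$ is unique, i.e. $f$ has a unique fixed point.

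I do not anticipate any genuine difficulty: this corollary is a pure specialization of an already-proved theorem, so the entire task reduces to checking that the substitution $g=f$ is admissible clause by clause, which is immediate. One could instead re-run the argument of Theorem~\ref{th32} with $g=f$ from the start, but that would merely repeat the same steps, so the reduction above is the economical route.
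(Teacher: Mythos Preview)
Your proposal is correct and matches the paper's approach: the paper states this corollary immediately after Theorem~\ref{th32} without giving any proof, so the intended argument is precisely the specialization $g=f$ that you describe. Nothing further is needed.
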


In what follows, we show that the additional hypothesis: 

\textit{for each sequence $(x_n)$ in $X$ satisfying $\beta(x_n,x_{n+1},t)\le1,~\forall~n\in\mathbb N,~t>0,$ there exists $k_0\in\mathbb N$ such that $\beta(x_m,x_n,t)\le1,~\forall~m,n\in\mathbb N$ with $m\ge n>k_0$ and $t>0$}

\noindent enables the conclusion of Theorem \ref{th32} to work in a complete GV fuzzy metric space.

\begin{theo}\label{th33}
\normalfont Let $(X,M,*)$ be a non-Archimedean complete fuzzy metric space and $f,g$ be self-mappings on $X.$ Suppose for some $\psi\in\Psi,$ and a mapping $\beta:X^2\times(0,\infty)\to(0,\infty),$ $(f,g)$ is a pair of $\beta$-$\psi$-fuzzy contractive mappings such that

(i) $(f,g)$ is a symmetric pair of $\beta$-admissible mappings,

(ii) for some $x_0\in X,$ $\beta(x_0,fx_0,t)\le1,$ $\forall~t>0,$

(iii) for each sequence $(x_n)$ in $X$ with $\beta(x_n,x_{n+1},t)\le1,~\forall~n\in\mathbb N,t>0$ and $\lim\limits_{n\to\infty}x_n=x,$ we have $\beta(x_n,x,t)\le1,~\forall~n\in\mathbb N,t>0,$

(iv) for each sequence $(x_n)$ in $X$ satisfying $\beta(x_n,x_{n+1},t)\le1,~\forall~n\in\mathbb N,~t>0,$ there exists $k'\in\mathbb N$ such that $\beta(x_m,x_n,t)\le1,~\forall~m,n\in\mathbb N$ with $m\ge n>k'$ and $t>0.$

Then $f$ and $g$ have a common fixed point in $X.$ 

Moreover, if $\beta(x,y,t)\le1,~\forall~x,y\in X,t>0,$ then the fixed point is unique.
\end{theo}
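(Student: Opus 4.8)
The plan is to run the scheme of Theorem~\ref{th32}, the only genuinely new ingredient being that hypothesis (iv) lets us upgrade the Picard-type sequence from merely $G$-Cauchy to Cauchy, so that completeness produces a convergent sequence instead of just a cluster point. I would build $x_1=fx_0,\ x_2=gx_1,\ x_{2n+1}=fx_{2n},\ x_{2n+2}=gx_{2n+1}$ exactly as in Theorem~\ref{res3}. Induction, with (ii) as base case and the symmetric $\beta$-admissibility (i) for the step, gives $\beta(x_n,x_{n+1},t)\le1$ and $\beta(x_{n+1},x_n,t)\le1$ for all $n,t$; and since $M$ is everywhere positive in a GV fuzzy metric space the $\beta$-$\psi$-fuzzy contractive inequality applies without restriction, yielding (as in Theorem~\ref{res3}) $M(x_n,x_{n+1},t)\ge\psi^n(M(x_0,x_1,t))\to1$, so $(x_n)$ is $G$-Cauchy.

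The heart of the proof — and the step I expect to be the main obstacle — is to show $(x_n)$ is \emph{Cauchy}. Suppose not: then there are $\epsilon\in(0,1)$, $t_0>0$ and, for each $k$, an index $n_k\ge k$ together with a \emph{minimal} $m_k>n_k$ with $M(x_{m_k},x_{n_k},t_0)\le1-\epsilon$. From $M(x_{m_k},x_{n_k},t_0)\ge M(x_{m_k},x_{m_k-1},t_0)*M(x_{m_k-1},x_{n_k},t_0)>M(x_{m_k},x_{m_k-1},t_0)*(1-\epsilon)$, the minimality of $m_k$, the fact that $M(x_{m_k},x_{m_k-1},t_0)\to1$, and the continuity of $*$, it follows that $M(x_{m_k},x_{n_k},t_0)\to1-\epsilon$; iterating the non-Archimedean inequality and again invoking continuity of $*$ (this plays the role of the ``reverse triangle inequality'' that is not available here), one sees that $M(x_a,x_b,t_0)\to1-\epsilon$ for every $a,b$ lying within distance $1$ of $m_k,n_k$. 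Let $k'$ be the index given by (iv) for $(x_n)$; for $k$ large the gap $m_k-n_k$ is at least $2$, so I may choose $a_k\in\{m_k-1,m_k\}$ even and $b_k\in\{n_k-1,n_k\}$ odd, whence $a_k>b_k>k'$, $x_{a_k+1}=fx_{a_k}$, $x_{b_k+1}=gx_{b_k}$, and, by (iv), $\beta(x_{a_k},x_{b_k},t_0)\le1$. The contractive inequality then gives
\[
M(x_{a_k+1},x_{b_k+1},t_0)\ \ge\ \psi\bigl(\min\{M(x_{a_k},x_{b_k},t_0),\,M(x_{a_k},x_{a_k+1},t_0),\,M(x_{b_k},x_{b_k+1},t_0)\}\bigr),
\]
and passing to the limit $k\to\infty$ (the last two entries of the minimum $\to1$, the first and the left-hand side $\to1-\epsilon$) yields $1-\epsilon\ge\psi(1-\epsilon)>1-\epsilon$, a contradiction. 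Hence $(x_n)$ is Cauchy; by completeness $x_n\to x$ for some $x\in X$, and by (iii) $\beta(x_n,x,t)\le1$ for all $n,t$.

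It remains to prove $fx=gx=x$, which proceeds exactly as in Theorem~\ref{th32}. If $gx\ne x$, pick $t_0$ with $0<M(x,gx,t_0)<1$; using $\beta(x_{2n},x,t_0)\le1$ in the contractive inequality via $M(x,gx,t_0)\ge M(x,x_{2n+1},t_0)*M(fx_{2n},gx,t_0)$ and letting $n\to\infty$ gives $M(x,gx,t_0)\ge\psi(M(x,gx,t_0))>M(x,gx,t_0)$, a contradiction, so $gx=x$. Now $\beta(x_{2n},x,t_0)\le1$ together with the symmetric $\beta$-admissibility (i) applied to $(x_{2n},x)$ gives $\beta(gx,fx_{2n},t_0)=\beta(x,x_{2n+1},t_0)\le1$; inserting this into the contractive inequality via $M(x,fx,t_0)\ge M(x,x_{2n+2},t_0)*M(fx,gx_{2n+1},t_0)$ and letting $n\to\infty$ yields $fx=x$ in the same way. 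Thus $x$ is a common fixed point of $f$ and $g$.

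Finally, uniqueness is as in Theorem~\ref{res3}: if $\beta\le1$ everywhere and $x\ne y$ were two common fixed points, choose $t_0$ with $0<M(x,y,t_0)<1$; then $M(x,y,t_0)\ge\beta(x,y,t_0)M(fx,gy,t_0)\ge\psi(\min\{M(x,y,t_0),1,1\})=\psi(M(x,y,t_0))>M(x,y,t_0)$, a contradiction. The only delicate point in the whole argument is the index bookkeeping in the Cauchyness step: since $f$ is iterated on the even-indexed terms and $g$ on the odd-indexed terms, while (iv) delivers $\beta(x_i,x_j,t)\le1$ only when $i\ge j$, one must shift $m_k$ and $n_k$ by $0$ or $1$ to line up the parities before the contractive inequality can be applied — and it is precisely there that the positivity of $M$ in GV spaces and the continuity of the $t$-norm (used as a surrogate reverse triangle inequality) are needed.
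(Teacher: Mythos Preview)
Your proof is correct and follows essentially the paper's approach: construct the Picard sequence, show it is $G$-Cauchy, upgrade it to Cauchy via a minimality/contradiction argument using (iv), then verify $fx=gx=x$ as in Theorem~\ref{th32} and uniqueness as in Theorem~\ref{res3}. The only notable difference is organizational: in the Cauchy step the paper handles the parity alignment by an explicit four-case analysis (according to the parities of $m(k),n(k)$) with auxiliary quantities $u(k),v(k),w(k),w'(k),r(k),r'(k)$, whereas your index-shift --- choosing $a_k\in\{m_k-1,m_k\}$ even and $b_k\in\{n_k-1,n_k\}$ odd --- reaches the same contradiction $1-\epsilon\ge\psi(1-\epsilon)$ more economically.
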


\begin{proof}
\normalfont Proceeding as in Theorem \ref{res3}, we obtain the $G$-Cauchy sequence $(x_n)$ in $X$ given by $x_1=fx_0,x_2=gx_1,\cdots,x_{2n+1}=fx_{2n},x_{2n+2}=gx_{2n+1},\cdots$ such that $\beta(x_n,x_{n+1},t)\le1,~\forall~n=0,1,2,\cdots$ and $t>0.$ Thus by condition (iv), there exists $k_1\in\mathbb N$ such that $\beta(x_m,x_n,t)\le1,~\forall~m,n\in\mathbb N$ with $m\ge n>k_1$ and $t>0.$

We show that $(x_n)$ is a Cauchy sequence. If not, there exist $\epsilon\in(0,1),t>0$ and $k_0\in\mathbb N$ with $k_0>k_1$ such that for $k\in\mathbb N$ with $k\ge k_0,$ we have $m'(k),n(k)\in\mathbb N$ with $m'(k)>n(k)\ge k$ satisfying $M(x_{m'(k)},x_{n(k)},t)\le1-\epsilon.$

For each $k\ge k_0,$ we set $m(k)$ to be the smallest positive integer exceeding $n(k)$ such that $M(x_{m(k)},x_{n(k)},t)\le1-\epsilon\text{ and }M(x_{m(k)-1},x_{n(k)},t)>1-\epsilon.$ 

Therefore $\forall~k\ge k_0,$ 

$
1-\epsilon \ge M(x_{m(k)},x_{n(k)},t)\\
~~~~~~~~~\ge M(x_{m(k)-1},x_{n(k)},t)*M(x_{m(k)-1},x_{m(k)},t)\\
~~~~~~~~~\ge(1-\epsilon)*M(x_{m(k)-1},x_{m(k)},t).$

Taking limit as $k\to\infty,$ we obtain $\lim\limits_{k\to\infty} M(x_{m(k)},x_{n(k)},t)=1-\epsilon.$

Now for each $k\in\mathbb N$ with $k\ge k_0,$ we set

$u(k)=M(x_{m(k)},x_{m(k)+2},t)*M(x_{m(k)+2},x_{n(k)+2},t)*M(x_{n(k)},x_{n(k)+2},t),\\
~~~~~~~~~\text{if $m(k)=$ odd, $n(k)=$ even}\\
~~~~~~~~~=M(x_{m(k)},x_{m(k)+2},t)*M(x_{m(k)+2},x_{n(k)+1},t)*M(x_{n(k)},x_{n(k)+1},t),\\
~~~~~~~~~\text{if $m(k)=$ odd, $n(k)=$ odd}\\
~~~~~~~~~=M(x_{m(k)},x_{m(k)+1},t)*M(x_{m(k)+1},x_{n(k)+2},t)*M(x_{n(k)},x_{n(k)+2},t),\\
~~~~~~~~~\text{if $m(k)=$ even, $n(k)=$ even}\\
~~~~~~~~~=M(x_{m(k)},x_{m(k)+1},t)*M(x_{m(k)+1},x_{n(k)+1},t)*M(x_{n(k)},x_{n(k)+1},t),\\
~~~~~~~~~\text{if $m(k)=$ even, $n(k)=$ odd}$

\noindent and

$
v(k)=M(x_{m(k)},x_{m(k)+2},t)*\beta(x_{m(k)+1},x_{n(k)+1},t)M(x_{m(k)+2},x_{n(k)+2},t)*M(x_{n(k)},\\
~~~~~~~~~x_{n(k)+2},t),\text{ if $m(k)=$ odd, $n(k)=$ even}\\
~~~~~~~~~=M(x_{m(k)},x_{m(k)+2},t)*\beta(x_{m(k)+1},x_{n(k)},t)M(x_{m(k)+2},x_{n(k)+1},t)*M(x_{n(k)},\\ ~~~~~~~~~x_{n(k)+1},t),\text{ if $m(k)=$ odd, $n(k)=$ odd}\\
~~~~~~~~~=M(x_{m(k)},x_{m(k)+1},t)*\beta(x_{m(k)},x_{n(k)+1},t)M(x_{m(k)+1},x_{n(k)+2},t)*M(x_{n(k)},\\
~~~~~~~~~x_{n(k)+2},t),\text{ if $m(k)=$ even, $n(k)=$ even}\\
~~~~~~~~~=M(x_{m(k)},x_{m(k)+1},t)*\beta(x_{m(k)},x_{n(k)},t)M(x_{m(k)+1},x_{n(k)+1},t)*M(x_{n(k)},x_{n(k)+1},\\
~~~~~~~~~t),\text{ if $m(k)=$ even, $n(k)=$ odd}$

Since $\beta(x_m,x_n,t)\le1,~\forall~m,n\in\mathbb N$ with $m\ge n>k_0$, we have $\forall~k\ge k_0,$ $1-\epsilon\ge M(x_{m(k)},x_{n(k)},t)\ge u(k)\ge v(k)\cdots(*).$

Now for each $k\in\mathbb N$  with $k\ge k_0,$ we also set

$w(k)=\psi(\min\{M(x_{m(k)+1},x_{n(k)+1},t),M(x_{m(k)+1},x_{m(k)+2},t),M(x_{n(k)+1},x_{n(k)+2},t)\}),\\
~~~~~~~~~\text{if $m(k)=$ odd, $n(k)=$ even}\\
~~~~~~~~~=\psi(\min\{M(x_{m(k)+1},x_{n(k)},t),M(x_{m(k)+1},x_{m(k)+2},t),M(x_{n(k)},x_{n(k)+1},t)\}),\\
~~~~~~~~~\text{if $m(k)=$ odd, $n(k)=$ odd}\\
~~~~~~~~~=\psi(\min\{M(x_{m(k)},x_{n(k)+1},t),M(x_{m(k)},x_{m(k)+1},t),M(x_{n(k)+1},x_{n(k)+2},t)\}),\\
~~~~~~~~~\text{if $m(k)=$ even, $n(k)=$ even}\\
~~~~~~~~~=\psi(\min\{M(x_{m(k)},x_{n(k)},t),M(x_{m(k)},x_{m(k)+1},t),M(x_{n(k)},x_{n(k)+1},t)\}),\\
~~~~~~~~~\text{if $m(k)=$ even, $n(k)=$ odd}$

\noindent{and}

$w'(k)=\psi(\min\{M(x_{m(k)},x_{m(k)+1},t)*M(x_{m(k)},x_{n(k)},t)*M(x_{n(k)},x_{n(k)+1},t),\\
~~~~~~~~~M(x_{m(k)+1},x_{m(k)+2},t),M(x_{n(k)+1},x_{n(k)+2},t)\}),\text{if $m(k)=$ odd, $n(k)=$ even}
\\
=\psi(\min\{M(x_{m(k)},x_{m(k)+1},t)*M(x_{m(k)},x_{n(k)},t),M(x_{m(k)+1},x_{m(k)+2},t),\\
~~~~~~~~~M(x_{n(k)},x_{n(k)+1},t)\}),\text{if $m(k)=$ odd, $n(k)=$ odd}
\\
=\psi(\min\{M(x_{m(k)},x_{n(k)},t)*M(x_{n(k)},x_{n(k)+1},t),M(x_{m(k)},x_{m(k)+1},t),\\
~~~~~~~~~M(x_{n(k)+1},x_{n(k)+2},t)\}),\text{if $m(k)=$ even, $n(k)=$ even}
\\
=\psi(\min\{M(x_{m(k)},x_{n(k)},t),M(x_{m(k)},x_{m(k)+1},t),M(x_{n(k)},x_{n(k)+1},t)\}),\\~~~~~~~~~\text{if $m(k)=$ even, $n(k)=$ odd}.$

For each $k\in\mathbb N$ with $k\ge k_0,$ we further set

$r(k)=M(x_{m(k)},x_{m(k)+2},t)*w(k)*M(x_{n(k)},x_{n(k)+2},t), \text{if $m(k)=$ odd, $n(k)=$ even}\\
~~~~~~~~~=M(x_{m(k)},x_{m(k)+2},t)*w(k)*M(x_{n(k)},x_{n(k)+1},t), \text{if $m(k)=$ odd, $n(k)=$ odd}\\
~~~~~~~~~=M(x_{m(k)},x_{m(k)+1},t)*w(k)*M(x_{n(k)},x_{n(k)+2},t), \text{if $m(k)=$ even, $n(k)=$ even}\\
~~~~~~~~~=M(x_{m(k)},x_{m(k)+1},t)*w(k)*M(x_{n(k)},x_{n(k)+1},t), \text{if $m(k)=$ even, $n(k)=$ odd}$

\noindent\text{and}

$r'(k)=M(x_{m(k)},x_{m(k)+2},t)*w'(k)*M(x_{n(k)},x_{n(k)+2},t), \text{if $m(k)=$ odd, $n(k)=$ even}\\
~~~~~~~~~=M(x_{m(k)},x_{m(k)+2},t)*w'(k)*M(x_{n(k)},x_{n(k)+1},t), \text{if $m(k)=$ odd, $n(k)=$ odd}\\
~~~~~~~~~=M(x_{m(k)},x_{m(k)+1},t)*w'(k)*M(x_{n(k)},x_{n(k)+2},t), \text{if $m(k)=$ even, $n(k)=$ even}\\
~~~~~~~~~=M(x_{m(k)},x_{m(k)+1},t)*w'(k)*M(x_{n(k)},x_{n(k)+1},t), \text{if $m(k)=$ even, $n(k)=$ odd}.$

Since $\lim\limits_{k\to\infty}w'(k)=\psi(1-\epsilon),$ we have $\lim\limits_{k\to\infty}r'(k)=\psi(1-\epsilon).$

Using $(*),$ we have $1-\epsilon\ge M(x_{m(k)},x_{n(k)},t)\ge u(k)\ge v(k)\ge r(k)\ge r'(k),~\forall~k\ge k_0.$ Thus by taking limit as $k\to\infty,$ we have $1-\epsilon\ge\psi(1-\epsilon)>1-\epsilon,$ which leads to a contradiction.

Thus $(x_n)$ is a Cauchy sequence and hence converges to some $x\in X.$ 

We now show that $fx=gx=x.$

If $gx\ne x,$ then there exists $t_0>0$ such that $0<M(x,gx,t_0)<1.$ 

Since $M(x,gx,t_0)\ge M(x,x_{2n+1},t_0)*M(fx_{2n},gx,t_0)\\
~~~~~~~~~\ge M(x,x_{2n+1},t_0)*\beta(x_{2n},x,t_0)M(fx_{2n},gx,t_0)\\
~~~~~~~~~\ge M(x,x_{2n+1},t_0)*\psi(\min\{M(x,x_{2n},t_0),M(x,gx,t_0),M(x_{2n+1},x_{2n},t_0)\})$

so by taking limit as $n\to\infty$ we have $M(x,gx,t_0)\ge\psi(M(x,gx,t_0)>M(x,gx,t_0),$ which leads to a contradiction.

Thus $gx=x.$ Similarly, $fx=x.$ Hence $fx=gx=x.$

Uniqueness of the fixed point can be proved by proceeding as in Theorem \ref{res3}.
\end{proof}

\begin{cor}
\normalfont \normalfont Let $(X,M,*)$ be a non-Archimedean complete fuzzy metric space and $f$ be a self-mapping on $X.$ Suppose for some $\psi\in\Psi,$ and a mapping $\beta:X^2\times(0,\infty)\to(0,\infty),$ $(f,f)$ is a pair of $\beta$-$\psi$-fuzzy contractive mappings such that

(i) $(f,f)$ is a symmetric pair of $\beta$-admissible mappings,

(ii) for some $x_0\in X,$ $\beta(x_0,fx_0,t)\le1,$ $\forall~t>0,$

(iii) for each sequence $(x_n)$ in $X$ with $\beta(x_n,x_{n+1},t)\le1,~\forall~n\in\mathbb N,t>0$ and $\lim\limits_{n\to\infty}x_n=x,$ we have $\beta(x_n,x,t)\le1,~\forall~n\in\mathbb N,t>0,$

(iv) for each sequence $(x_n)$ in $X$ satisfying $\beta(x_n,x_{n+1},t)\le1,~\forall~n\in\mathbb N,~t>0,$ there exists $k_0\in\mathbb N$ such that $\beta(x_m,x_n,t)\le1,~\forall~m,n\in\mathbb N$ with $m\ge n>k_0$ and $t>0.$

Then $f$ has a fixed point in $X.$ 

Moreover, if $\beta(x,y,t)\le1,~\forall~x,y\in X,t>0,$ then the fixed point is unique.
\end{cor}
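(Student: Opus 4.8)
The plan is to obtain this corollary as an immediate specialization of Theorem \ref{th33} by taking $g=f$. First I would observe that when $g=f$, the requirement that $(f,f)$ be a pair of $\beta$-$\psi$-fuzzy contractive mappings is exactly what the hypothesis of Theorem \ref{th33} demands of the pair $(f,g)$: it reads $M(x,y,t)>0\implies\beta(x,y,t)M(fx,fy,t)\ge\psi(\min\{M(x,y,t),M(x,fx,t),M(y,fy,t)\})$ for all $x\ne y$ and $t>0$, which is precisely the inequality used in the proof of Theorem \ref{th33} with both maps equal to $f$. Likewise, conditions (i)--(iv) of the corollary are verbatim conditions (i)--(iv) of Theorem \ref{th33} with every occurrence of $g$ replaced by $f$, and they remain perfectly meaningful when the two maps coincide.

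Next I would simply invoke Theorem \ref{th33} to conclude that $f$ and $f$ have a common fixed point $x\in X$; since a common fixed point of $f$ and $f$ is nothing but a point $x$ with $fx=x$, this is the desired fixed point of $f$. For the uniqueness clause, the additional hypothesis $\beta(x,y,t)\le1$ for all $x,y\in X,\ t>0$ is identical to the corresponding hypothesis in Theorem \ref{th33}, so the uniqueness assertion there yields that the fixed point of $f$ is unique.

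Since every ingredient is produced by the direct substitution $g\mapsto f$, there is no genuine obstacle in this argument; the only thing to verify is the trivial bookkeeping that the symmetric $\beta$-admissibility condition and conditions (iii) and (iv) are unchanged when $f=g$, which is clear from their statements. The proof therefore amounts to a single sentence citing Theorem \ref{th33}.
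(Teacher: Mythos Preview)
Your proposal is correct and matches the paper's approach exactly: the corollary is stated immediately after Theorem \ref{th33} with no separate proof, being understood as the special case $g=f$. Your observation that all hypotheses and the uniqueness clause specialize verbatim under $g\mapsto f$ is precisely the intended one-line justification.
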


By putting $\beta\equiv1,$ in Theorem \ref{th33} we have the following:

\begin{cor}\label{cor35sa}
\normalfont Let $(X,M,*)$ be a non-Archimedean complete fuzzy metric space and $f,g$ be self-mappings on $X.$ Suppose for some $\psi\in\Psi,$ $$M(fx,gy,t)\ge\psi(\min\{M(x,y,t),M(x,fx,t),M(y,gy,t)\}),~\forall~x,y\in X,t>0.$$ Then $f$ and $g$ have a unique common fixed point in $X.$
\end{cor}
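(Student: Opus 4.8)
The plan is to derive Corollary \ref{cor35sa} as a special case of Theorem \ref{th33} by taking $\beta\equiv1$, exactly as the sentence preceding the statement announces. So the main task is to verify that, with this choice of $\beta$, every hypothesis (i)--(iv) of Theorem \ref{th33} is automatically satisfied, and that the contractive inequality given here coincides with the $\beta$-$\psi$-fuzzy contractive condition for the pair $(f,g)$.

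First I would observe that when $\beta(x,y,t)\equiv1$, the defining inequality of a pair of $\beta$-$\psi$-fuzzy contractive mappings, namely $M(x,y,t)>0\implies\beta(x,y,t)M(fx,gy,t)\ge\psi(\min\{M(x,y,t),M(x,fx,t),M(y,gy,t)\})$, reduces precisely to $M(fx,gy,t)\ge\psi(\min\{M(x,y,t),M(x,fx,t),M(y,gy,t)\})$ for all $x,y\in X,t>0$; note that in a GV fuzzy metric space $M(x,y,t)>0$ always holds, so the implication hypothesis is vacuous and the inequality is unconditional, matching the statement verbatim. Next I would check (i): with $\beta\equiv1$ the antecedent $\beta(x,y,t)\le1$ always holds and the consequent $\max\{\beta(fx,gy,t),\beta(gy,fx,t),\beta(gx,fy,t),\beta(fy,gx,t)\}\le1$ is just $\max\{1,1,1,1\}=1\le1$, so $(f,g)$ is trivially a symmetric pair of $\beta$-admissible mappings. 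Condition (ii) holds for any $x_0\in X$ since $\beta(x_0,fx_0,t)=1\le1$. Condition (iii) holds because $\beta(x_n,x,t)=1\le1$ for every sequence and limit. Condition (iv) holds because $\beta(x_m,x_n,t)=1\le1$ for all $m,n$, so one may take $k'=1$. Finally the uniqueness clause of Theorem \ref{th33} requires $\beta(x,y,t)\le1$ for all $x,y\in X,t>0$, which again holds identically; hence Theorem \ref{th33} yields not merely existence but uniqueness of the common fixed point, which is exactly what the corollary asserts.

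Since all of this is a direct substitution, there is essentially no obstacle: the only point worth a moment's care is confirming that the inequality in the corollary is stated without the ``$M(x,y,t)>0\implies$'' guard, and explaining that this loss of a hypothesis is harmless precisely because in a GV fuzzy metric space positivity of $M$ is built into axiom (a) of Definition 2.3, so the guarded and unguarded forms are equivalent here. I would write the proof in two or three sentences: set $\beta\equiv1$, note that (i)--(iv) and the uniqueness hypothesis of Theorem \ref{th33} are then satisfied trivially, and that the $\beta$-$\psi$-contractive condition becomes the displayed inequality; then invoke Theorem \ref{th33} to conclude.

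\begin{proof}
\normalfont Take $\beta(x,y,t)\equiv1$ on $X^2\times(0,\infty).$ Since the space is a GV fuzzy metric space, $M(x,y,t)>0$ for all $x,y\in X,t>0,$ so the hypothesis $M(fx,gy,t)\ge\psi(\min\{M(x,y,t),M(x,fx,t),M(y,gy,t)\}),~\forall~x,y\in X,t>0,$ is exactly the assertion that $(f,g)$ is a pair of $\beta$-$\psi$-fuzzy contractive mappings for this choice of $\beta.$ Moreover, with $\beta\equiv1,$ conditions (i)--(iv) of Theorem \ref{th33} hold trivially: the symmetric $\beta$-admissibility in (i) reduces to $\max\{1,1,1,1\}\le1;$ condition (ii) holds for every $x_0\in X$ as $\beta(x_0,fx_0,t)=1\le1;$ and in (iii) and (iv) we have $\beta(x_n,x,t)=1\le1$ and $\beta(x_m,x_n,t)=1\le1$ for all indices, so (iv) holds with $k'=1.$ Finally, the additional requirement $\beta(x,y,t)\le1,~\forall~x,y\in X,t>0,$ of Theorem \ref{th33} is also satisfied. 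Hence, by Theorem \ref{th33}, $f$ and $g$ have a unique common fixed point in $X.$
\end{proof}
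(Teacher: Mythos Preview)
Your proposal is correct and follows exactly the paper's approach: the paper simply says ``By putting $\beta\equiv1$ in Theorem \ref{th33} we have the following,'' and you have spelled out in detail why that substitution satisfies all hypotheses (i)--(iv) and the uniqueness clause of Theorem \ref{th33}. Your observation that the guard $M(x,y,t)>0$ is automatic in a GV fuzzy metric space is the only point requiring a word of justification, and you handle it correctly.
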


We encourage the readers to compare the above result with Corollary \ref{corts} and Theorem \ref{cort}.

\begin{theo}\label{res4}
\normalfont Let $(X,M,*)$ be a weak $G$-complete (KM) fuzzy metric space and $f,g$ be continuous self-mappings on $X.$ Suppose for some $\psi\in\Psi,$ and a mapping $\beta:X^2\times(0,\infty)\to(0,\infty),$ $M(x,y,t)>0\implies$ $$\beta(x,y,t)M(gfx,fgy,t)\ge\psi(\min\{M(x,y,t),M(x,fx,t),M(y,gy,t)\}),$$ $\forall~x,y\in X,t>0$ such that

(i) $(f,g)$ is a symmetric pair of $\beta$-admissible mappings,

(ii) for some $x_0\in X,$ $\beta(x_0,fx_0,t)\le1$ and $M(fx_0,gfx_0,t),M(gfx_0,fgfx_0,t)>0,$ $\forall~t>0.$

\noindent Then $f$ and $g$ have a common fixed point in $X.$ 

Moreover, if $\beta(x,y,t)\le1,~\forall~x,y\in X,t>0$ and for $x,y~(x\ne y)\in X,$ $M(x,y,t)>0,~\forall~t>0,$ then the fixed point is unique.
\end{theo}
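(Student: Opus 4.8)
The plan is to follow the scheme of Theorem \ref{res3}: build the alternating Picard iteration of $f$ and $g$, show it is $G$-Cauchy, and pass to a cluster point. The new feature is that the contractive hypothesis is phrased through the compositions $gf$ and $fg$, so the iteration must be set up so that these compositions land on consecutive terms of the sequence.

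First I would put $z_0=x_0$ and define $z_{2n+1}=fz_{2n}$, $z_{2n+2}=gz_{2n+1}$, so $z_1=fx_0$, $z_2=gfx_0$, $z_3=fgfx_0$, etc. As in Theorem \ref{res3}, from $\beta(x_0,fx_0,t)\le1$ and hypothesis (i) an induction gives $\beta(z_n,z_{n+1},t)\le1$ and $\beta(z_{n+1},z_n,t)\le1$ for all $n$ and $t>0$. Writing $a_n(t)=M(z_n,z_{n+1},t)$, the key observation is that the alternating construction forces $gfz_{2n}=z_{2n+2}$, $fgz_{2n+1}=z_{2n+3}$ and $fgz_{2n-1}=z_{2n+1}$. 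Plugging $(x,y)=(z_{2n},z_{2n+1})$ into the contractive inequality and using $\beta\le1$ on the relevant pair yields $a_{2n+2}\ge\psi(\min\{a_{2n},a_{2n+1}\})$; plugging in $(x,y)=(z_{2n},z_{2n-1})$ and using the symmetry of $M$ yields $a_{2n+1}\ge\psi(\min\{a_{2n-1},a_{2n}\})$. Since $a_1(t),a_2(t)>0$ by (ii) and $\psi$ is positive off $0$, an induction propagates positivity of every $a_n$, so the two bounds merge into $a_{n+1}\ge\psi(\min\{a_{n-1},a_n\})$ for all $n\ge2$.

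Next I would deduce $a_n(t)\to1$ for each $t$. Putting $d_n=\min\{a_{n-1},a_n\}$, from $a_{n+1}\ge\psi(d_n)\ge d_n$ and $a_n\ge d_n$ one gets $d_{n+1}\ge d_n$, so $d_n\uparrow L\in(0,1]$; passing to the limit in $d_{n+1}\ge\psi(d_{n-1})$ forces $L\ge\psi(L)$, hence $L=1$ and $a_n(t)\to1$. Thus $(z_n)$ is $G$-Cauchy, so by weak $G$-completeness it has a cluster point $x$; choose a convergent subsequence $z_{r_n}\to x$ with the $r_n$ all of one parity (say all even, the odd case being symmetric). Then, exactly as in Theorem \ref{res3}, the triangle inequality, continuity of $*$, the $G$-Cauchy property and continuity of $f$ give $M(x,fx,t)\ge M(x,z_{r_n},t/3)*M(z_{r_n},z_{r_n+1},t/3)*M(fz_{r_n},fx,t/3)\to1$, so $fx=x$; and since $M(z_{r_n-1},x,t)\to1$ while $z_{r_n}=gz_{r_n-1}$, continuity of $g$ gives $z_{r_n}\to gx$, whence $gx=x$. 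For uniqueness, if $u\ne v$ are common fixed points and $0<M(u,v,t_0)<1$, then $gfu=u$ and $fgv=v$, so the contractive inequality with $\beta\le1$ gives $M(u,v,t_0)\ge\psi(M(u,v,t_0))>M(u,v,t_0)$, a contradiction.

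The main obstacle is the middle step: one must choose the iteration so that both $gf$ and $fg$ fall on consecutive terms, which forces the contractive inequality to be applied with two different pairs of arguments — one ``in order'' and one with the roles reversed plus the symmetry of $M$ — and this produces only the two-step recursion $a_{n+1}\ge\psi(\min\{a_{n-1},a_n\})$ rather than the one-step $a_{n+1}\ge\psi(a_n)$ of Theorem \ref{res3}; the short monotonicity argument for $d_n=\min\{a_{n-1},a_n\}$ is what recovers $a_n\to1$. Everything else runs parallel to the proof of Theorem \ref{res3}.
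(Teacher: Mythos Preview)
Your proposal is correct and follows the paper's scheme: the same alternating iteration, the same $\beta$-induction, the same pair of substitutions $(x,y)=(z_{2n},z_{2n+1})$ and $(x,y)=(z_{2n},z_{2n-1})$ yielding the two-step recursion $a_{n+1}\ge\psi(\min\{a_{n-1},a_n\})$, and the same endgame borrowed from Theorem~\ref{res3}. The only difference is in how you extract $a_n(t)\to1$: the paper iterates the recursion directly to get $M(x_{2n-1},x_{2n},t),\,M(x_{2n},x_{2n+1},t)\ge\psi^{n-1}(S_t)$ with $S_t=\min\{a_1(t),a_2(t)\}$ and then invokes $\psi^k\to1$, whereas you run a monotone-limit argument on $d_n=\min\{a_{n-1},a_n\}$ and use continuity of $\psi$; both are standard and equally short.
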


\begin{proof}
\normalfont Define a sequence $(x_n)$ as follows: $x_1=fx_0,x_2=gx_1,\cdots,x_{2n+1}=fx_{2n},x_{2n+2}=gx_{2n+1},\cdots$.

Then proceeding as Theorem \ref{res3}, we obtain $$\beta(x_n,x_{n+1},t),\beta(x_{n+1},x_n,t)\le1,~\forall~n\in\mathbb N,t>0.$$

Let $n$ be a positive integer bigger than $1$. 

If $n\text{ is }\text{odd},$ then for chosen $t>0,$ $M(x_{n-1},x_n,t)>0\implies$ 

$M(x_{n+1},x_{n+2},t)=M(gfx_{n-1},fgx_{n},t)$

$\ge\beta(x_{n-1},x_{n},t)M(gfx_{n-1},fgx_{n},t)$

$\ge\psi(\min\{M(x_{n-1},x_{n},t),M(x_{n-1},fx_{n-1},t),M(x_{n},gx_{n},t)\})$

$=\psi(\min\{M(x_{n-1},x_{n},t),M(x_{n},x_{n+1},t)\})$

If $n$ is even, then for chosen $t>0,$ $M(x_{n-1},x_n,t)>0\implies$ 

$M(x_{n+1},x_{n+2},t)=M(gfx_{n},fgx_{n-1},t)$

$\ge\beta(x_{n},x_{n-1},t)M(gfx_{n},fgx_{n-1},t)$

$\ge\psi(\min\{M(x_{n},x_{n-1},t),M(x_{n},fx_{n},t),M(x_{n-1},gx_{n-1},t)\})$

$=\psi(\min\{M(x_{n-1},x_{n},t),M(x_{n},x_{n+1},t)\}).$

Consequently, $\forall~n\ge2,t>0,$ $M(x_{n-1},x_n,t)>0\implies$ $
\text{either } M(x_{n+1},x_{n+2},t)$ $\ge\psi
(M(x_{n-1},x_{n},t)
\text{ or }M(x_{n+1},x_{n+2},t)\ge\psi(M(x_{n},x_{n+1},t)).$

Since $M(x_1,x_2,t),M(x_2,x_3,t)>0,$ we conclude $M(x_{n-1},x_n,t)>0,~\forall~n\ge2,t>0.$ 

Let us set $S_t=\min\{M(x_1,x_2,t),M(x_2,x_3,t)\},~\forall~t>0.$ 

Then $M(x_{2n-1},x_{2n},t)\ge\psi^{n-1}(S_t)$ and $M(x_{2n},x_{2n+1},t)\ge\psi^{n-1}(S_t),~\forall~n\ge2,t>0.$

Now $\forall~t>0,S_t>0\implies\lim\limits_{n\to\infty}\psi^{n-1}(S_t)=1.$

Thus $\forall~t>0,$ $\lim\limits_{n\to\infty}M(x_{2n-1},x_{2n},t)=\lim\limits_{n\to\infty}M(x_{2n},x_{2n+1},t)=1,$ and consequently, $\lim\limits_{n\to\infty}M(x_{n},x_{n+1},t)=1.$

So $(x_n)$ is a $G$-Cauchy sequence in $X.$

Now proceeding as Theorem \ref{res3}, we conclude that $x$ is a common fixed point of $f$ and $g$ which is unique if $\beta(x,y,t)\le1,~\forall~x,y\in X,t>0$ and for $x,y~(x\ne y)\in X,$ $M(x,y,t)>0,~\forall~t>0.$

Hence the result follows.
\end{proof}

\begin{exm}
\normalfont We note that Example \ref{ex33} satisfies the hypothesis of Theorem \ref{res4}. 

Let us now replace the value of $\beta,$ in Example \ref{ex33}, with $1.$ It is clear that the modified example satisfies the hypothesis of Theorem \ref{res4} as well. 

We now show that the modified example does not satisfy the hypothesis of Theorem \ref{res3}. 

Suppose otherwise. Then by putting $x=\frac{1}{2},y=1,t=1$ in $$\beta(x,y,t)M(fx,gy,t)\ge\psi(\min\{M(x,y,t),M(x,fx,t),M(y,gy,t)\}),$$ we obtain $M\left(\frac{1}{2},1,1\right)\ge\psi\left(M\left(\frac{1}{2},1,1\right)\right),$ which is a contradiction since $\psi(x)>x,~\forall~x\in(0,1).$
\end{exm}

\begin{rem}
\normalfont We note that Example \ref{ex1} satisfies the hypothesis of first part of Theorem \ref{res4}. Since $f,g$ in the said Example have more than one common fixed points, the last condition is essential to ensure the uniqueness of the common fixed point in Theorem \ref{res4}. 
\end{rem}

\begin{cor}
\normalfont Let $(X,M,*)$ be a weak $G$-complete (KM) fuzzy metric space and $f$ be a continuous self-mapping on $X.$ Suppose for some $\psi\in\Psi,$ and a mapping $\beta:X^2\times(0,\infty)\to(0,\infty),$ $M(x,y,t)>0\implies$ $$\beta(x,y,t)M(f^2x,f^2y,t)\ge\psi(\min\{M(x,y,t),M(x,fx,t),M(y,fy,t)\}),$$ $\forall~x,y\in X,t>0$ such that

(i) $(f,f)$ is a symmetric pair of $\beta$-admissible mappings,

(ii) for some $x_0\in X,$ $\beta(x_0,fx_0,t)\le1$ and $M(fx_0,f^2x_0,t),M(f^2x_0,f^3x_0,t)>0,$ $\forall~t>0.$

\noindent Then $f$ has a fixed point in $X.$ 

Moreover, if $\beta(x,y,t)\le1,~\forall~x,y\in X,t>0$ and for $x,y~(x\ne y)\in X,$ $M(x,y,t)>0,~\forall~t>0,$ then the fixed point is unique.
\end{cor}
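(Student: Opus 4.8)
The plan is to derive this corollary directly from Theorem \ref{res4} by specializing $g=f$. With this substitution the hypothesized inequality of Theorem \ref{res4}, namely $M(x,y,t)>0\implies\beta(x,y,t)M(gfx,fgy,t)\ge\psi(\min\{M(x,y,t),M(x,fx,t),M(y,gy,t)\})$, becomes precisely $M(x,y,t)>0\implies\beta(x,y,t)M(f^2x,f^2y,t)\ge\psi(\min\{M(x,y,t),M(x,fx,t),M(y,fy,t)\})$, since $gfx=f(fx)=f^2x$, $fgy=f(fy)=f^2y$, and $gy=fy$. Likewise condition (i) of Theorem \ref{res4} — that $(f,g)$ is a symmetric pair of $\beta$-admissible mappings — becomes exactly condition (i) here with $g$ replaced by $f$; and condition (ii) of Theorem \ref{res4}, which requires $\beta(x_0,fx_0,t)\le1$ together with $M(fx_0,gfx_0,t),M(gfx_0,fgfx_0,t)>0$ for all $t>0$, becomes $\beta(x_0,fx_0,t)\le1$ and $M(fx_0,f^2x_0,t),M(f^2x_0,f^3x_0,t)>0$ for all $t>0$, matching condition (ii) of the corollary verbatim.

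First I would observe that a continuous self-mapping $f$ furnishes a pair $(f,f)$ of continuous self-mappings, so the blanket hypothesis of Theorem \ref{res4} is met. Then I would invoke Theorem \ref{res4} to conclude that $f$ and $g=f$ have a common fixed point $x\in X$; but a common fixed point of $f$ and $f$ is simply a point with $fx=x$, i.e.\ a fixed point of $f$. For the ``moreover'' part, the additional hypotheses $\beta(x,y,t)\le1$ for all $x,y\in X,t>0$ and $M(x,y,t)>0$ for all distinct $x,y$ and all $t>0$ are exactly the uniqueness hypotheses of Theorem \ref{res4} (again read with $g=f$), so the uniqueness assertion transfers as well.

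There is essentially no obstacle here: the only thing to check is that the translation $g\mapsto f$ carries each clause of Theorem \ref{res4} onto the corresponding clause of the corollary, which is the routine verification sketched above. One minor point worth noting in the write-up is that Theorem \ref{res4}, unlike Theorem \ref{res3}, does not demand $M(x,fx,t),M(x,gx,t)>0$ for \emph{all} $x\in X$, only the $x_0$-localized positivity in its hypothesis (ii); hence no hidden extra assumption is needed, and the corollary's hypotheses are genuinely sufficient.

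\begin{proof}
\normalfont Apply Theorem \ref{res4} with $g=f$. Since $gfx=f^2x$, $fgy=f^2y$ and $gy=fy$, the displayed contractive condition of Theorem \ref{res4} coincides with the one assumed here, and conditions (i), (ii) translate identically. As $f$ is a continuous self-mapping, $f,g~(=f)$ are continuous self-mappings on $X$. Therefore $f$ and $g=f$ have a common fixed point $x\in X$, that is, $fx=x$. Moreover, under the additional hypotheses $\beta(x,y,t)\le1,~\forall~x,y\in X,t>0$ and $M(x,y,t)>0,~\forall~t>0$ for $x,y~(x\ne y)\in X$, Theorem \ref{res4} guarantees this common fixed point is unique, so the fixed point of $f$ is unique.
\end{proof}
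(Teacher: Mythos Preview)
Your proposal is correct and follows exactly the intended approach: the corollary is obtained from Theorem \ref{res4} by the specialization $g=f$, and the paper simply states it as an immediate corollary without a separate proof. Your verification that each hypothesis of Theorem \ref{res4} translates verbatim into the corresponding hypothesis of the corollary is accurate and complete.
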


\begin{cor}
\normalfont Let $(X,M,*)$ be a weak $G$-complete (KM) fuzzy metric space and $f$ be a continuous self-mapping on $X.$ Suppose for some $\psi\in\Psi,$ and a mapping $\beta:X^2\times(0,\infty)\to(0,\infty),$ $M(x,y,t)>0\implies$ $$\beta(x,y,t)M(fx,fy,t)\ge\psi(\min\{M(x,y,t),M(x,fx,t)\},$$ $\forall~x,y\in X,t>0$ such that

(i) $(f,I)$ is a symmetric pair of $\beta$-admissible mappings ($I$ being the identity self-mapping on $X),$

(ii) for some $x_0\in X,$ $\beta(x_0,fx_0,t)\le1$ and $M(fx_0,f^2x_0,t)>0,$ $\forall~t>0.$

\noindent Then $f$ has a fixed point in $X.$ 

Moreover, if $\beta(x,y,t)\le1,~\forall~x,y\in X,t>0$ and for $x,y~(x\ne y)\in X,$ $M(x,y,t)>0,~\forall~t>0,$ then the fixed point is unique.
\end{cor}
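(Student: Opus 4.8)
The plan is to derive this statement as an immediate specialization of Theorem~\ref{res4}, taking $g=I$, the identity self-mapping on $X$. First I would observe that $I$ is trivially continuous, so that $f$ and $g:=I$ are continuous self-mappings on $X$, as demanded by the hypothesis of Theorem~\ref{res4}.

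Next I would rewrite the contractive inequality under this substitution. With $g=I$ we have $gfx=fx$ and $fgy=fy$ for all $x,y\in X$, and moreover $M(y,gy,t)=M(y,y,t)=1$ for every $y\in X$ and $t>0$ by the definition of a (KM) fuzzy metric space. Hence $\min\{M(x,y,t),M(x,fx,t),M(y,gy,t)\}=\min\{M(x,y,t),M(x,fx,t)\}$, and therefore the assumed implication $M(x,y,t)>0\implies\beta(x,y,t)M(fx,fy,t)\ge\psi(\min\{M(x,y,t),M(x,fx,t)\})$ is exactly the inequality $M(x,y,t)>0\implies\beta(x,y,t)M(gfx,fgy,t)\ge\psi(\min\{M(x,y,t),M(x,fx,t),M(y,gy,t)\})$ required in Theorem~\ref{res4}.

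Then I would verify the remaining hypotheses. Condition (i) of the corollary states that $(f,I)$ is a symmetric pair of $\beta$-admissible mappings, which is precisely condition (i) of Theorem~\ref{res4} for the pair $(f,g)=(f,I)$. For condition (ii), $\beta(x_0,fx_0,t)\le1$ for all $t>0$ is assumed; $M(fx_0,gfx_0,t)=M(fx_0,fx_0,t)=1>0$ holds automatically; and $M(gfx_0,fgfx_0,t)=M(fx_0,f^2x_0,t)>0$ for all $t>0$ is assumed. Thus all hypotheses of Theorem~\ref{res4} are met, and that theorem produces a common fixed point $x$ of $f$ and $I$. Since $Ix=x$ for every $x\in X$, the point $x$ is simply a fixed point of $f$, which establishes the first assertion.

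Finally, for the uniqueness clause I would invoke the moreover-part of Theorem~\ref{res4}: under the extra hypotheses $\beta(x,y,t)\le1$ for all $x,y\in X,\ t>0$ and $M(x,y,t)>0$ for all distinct $x,y\in X$ and $t>0$, that theorem yields a \emph{unique} common fixed point of $f$ and $I$, which is the unique fixed point of $f$. I do not expect any genuine obstacle here; the only steps requiring care are the observation that $M(y,y,t)=1$ collapses the third entry of the minimum, and the remark that a common fixed point of $f$ and the identity map is nothing but a fixed point of $f$.
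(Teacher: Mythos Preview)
Your proposal is correct and takes essentially the same approach as the paper, which simply states that the corollary is immediate by setting $g=I$ in Theorem~\ref{res4}. Your additional verifications---that $M(y,Iy,t)=1$ collapses the third term of the minimum, that $M(fx_0,gfx_0,t)=1>0$ is automatic, and that $M(gfx_0,fgfx_0,t)=M(fx_0,f^2x_0,t)$---are exactly the details one needs to check, and they are all sound.
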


\begin{proof}
\normalfont Immediate by setting $g=I$ in Theorem \ref{res4}.
\end{proof}

In the next theorem, we omit the continuity hypothesis of one of $f,g$ in Theorem \ref{res4}.

\begin{theo}\label{onlyf}
\normalfont Let $(X,M,*)$ be a weak $G$-complete (KM) fuzzy metric space and $f,g$ be self-mappings on $X$ such that one of them is continuous. Suppose for some $\psi\in\Psi,$ and a mapping $\beta:X^2\times(0,\infty)\to(0,\infty),$ $M(x,y,t)>0\implies$ $$\beta(x,y,t)M(gfx,fgy,t)\ge\psi(\min\{M(x,y,t),M(x,fx,t),M(y,gy,t)\}),$$ $\forall~x,y\in X,t>0$ such that

(i) $(f,g)$ is a symmetric pair of $\beta$-admissible mappings,

(ii) for some $x_0\in X,$ $\beta(x_0,fx_0,t)\le1$ and $M(fx_0,gfx_0,t),M(gfx_0,fgfx_0,t)>0,$ $\forall~t>0,$

(iii) for each sequence $(x_n)$ in $X$ with $\beta(x_n,x_{n+1},t)\le1,~\forall~n\in\mathbb N,t>0$ and a subsequence $(x_{r_n})$ of $(x_n)$ with $\lim\limits_{n\to\infty}x_{r_n}=x,$ we have $\beta(x_{r_n},x,t),\beta(x,x_{r_n},t)\le1,~\forall~n\in\mathbb N,t>0.$

Then $f$ and $g$ have a common fixed point in $X.$ 

Moreover, if $\beta(x,y,t)\le1,~\forall~x,y\in X,t>0$ and for $x,y~(x\ne y)\in X,$ $M(x,y,t)>0,~\forall~t>0,$ then the fixed point is unique.
\end{theo}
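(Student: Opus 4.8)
The plan is to follow the proof of Theorem \ref{res4} almost verbatim; the one genuinely new point is that, since now only one of $f,g$ is assumed continuous, one of the two equations $fx=x$, $gx=x$ must be extracted from the contractive inequality together with condition (iii) instead of from continuity.

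First I would set $x_1=fx_0$, $x_2=gx_1$, $x_{2n+1}=fx_{2n}$, $x_{2n+2}=gx_{2n+1}$, exactly as in Theorem \ref{res4}. The $G$-Cauchy part of that proof uses only condition (ii), symmetric $\beta$-admissibility, induction, and the identities $x_{n+1}=gfx_{n-1}$, $x_{n+2}=fgx_n$ for $n$ odd (with the analogues for $n$ even); none of it involves continuity, so it transfers unchanged and yields $\beta(x_n,x_{n+1},t),\beta(x_{n+1},x_n,t)\le1$, the recursion $M(x_{n-1},x_n,t)>0\implies M(x_{n+1},x_{n+2},t)\ge\psi(\min\{M(x_{n-1},x_n,t),M(x_n,x_{n+1},t)\})$, hence $M(x_{2n-1},x_{2n},t),M(x_{2n},x_{2n+1},t)\ge\psi^{n-1}(S_t)$ with $S_t=\min\{M(x_1,x_2,t),M(x_2,x_3,t)\}>0$, so that $\lim_n M(x_n,x_{n+1},t)=1$ and $(x_n)$ is $G$-Cauchy. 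By weak $G$-completeness it clusters, and taking a convergent subsequence $x_{r_n}\to x$ and using the $G$-Cauchy property I obtain two subsequences $(x_{p_n})$, $(x_{q_n})$ of $(x_n)$, with all $p_n$ even and all $q_n$ odd, both converging to $x$, and such that every fixed shift of them also converges to $x$.

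To finish, I would split on which of $f,g$ is continuous; suppose it is $f$ (the case of $g$ is the mirror image). Continuity and $x_{p_n+1}=fx_{p_n}\to fx$, together with $x_{p_n+1}\to x$, give $fx=x$. For $gx=x$, apply the contractive inequality at $(x,x_{q_n})$: for large $n$ one has $M(x,x_{q_n},t)>0$; condition (iii) applied to the subsequence $(x_{q_n})$ gives $\beta(x,x_{q_n},t)\le1$; and since $q_n$ is odd, $gfx=g(fx)=gx$ while $fgx_{q_n}=fx_{q_n+1}=x_{q_n+2}$, so $M(gx,x_{q_n+2},t)\ge\beta(x,x_{q_n},t)M(gfx,fgx_{q_n},t)\ge\psi(\min\{M(x,x_{q_n},t),M(x,fx,t),M(x_{q_n},x_{q_n+1},t)\})$. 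As $n\to\infty$ the right side tends to $\psi(1)=1$ (using $fx=x$ and $G$-Cauchyness), so $M(gx,x_{q_n+2},t)\to1$ for every $t>0$; since also $x_{q_n+2}\to x$, a triangle inequality forces $M(gx,x,t)=1$ for all $t>0$, i.e. $gx=x$. When $g$ is the continuous map one first gets $gx=x$ from continuity and then $fx=x$ by applying the inequality at $(x_{p_n},x)$ (now $p_n$ even, so $gfx_{p_n}=x_{p_n+2}$ and $fgx=fx$), invoking $\beta(x_{p_n},x,t)\le1$ from condition (iii). Uniqueness under the extra hypotheses is verbatim as in Theorems \ref{res3} and \ref{res4}: two distinct common fixed points $x,y$ satisfy $gfx=x$, $fgy=y$, and at a $t_0$ with $0<M(x,y,t_0)<1$ the inequality yields $M(x,y,t_0)\ge\beta(x,y,t_0)M(x,y,t_0)\ge\psi(M(x,y,t_0))>M(x,y,t_0)$, a contradiction.

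The main obstacle is exactly this middle step: with the continuity of one map gone, the corresponding equation cannot be read off from the iteration and must be produced by feeding precisely the right pair of points into the asymmetric inequality $M(gfx,fgy,\cdot)\ge\cdots$, so that the $gf$-slot collapses to $gx$ (resp. the $fg$-slot to $fx$) while the other slot becomes a shift of the cluster subsequence converging to $x$. This pins down the parity of the index one must use and, with it, whether $\beta(x,x_{r_n},t)\le1$ or $\beta(x_{r_n},x,t)\le1$ is the instance of condition (iii) that is needed; carrying the two parities $(x_{p_n})$, $(x_{q_n})$ through consistently is the one place requiring care, the rest being a transcription of the proofs of Theorems \ref{res4} and \ref{res3}.
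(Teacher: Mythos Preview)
Your proposal is correct and follows essentially the same approach as the paper: build the iteration $(x_n)$ exactly as in Theorem \ref{res4} to obtain a $G$-Cauchy sequence with the required $\beta$-bounds, take a cluster point $x$, use continuity of one map (say $f$) and $x_{r_n+1}=fx_{r_n}$ to get $fx=x$, and then recover $gx=x$ by applying the contractive inequality at a pair $(x,x_{\text{odd index}})$ so that the $gf$-slot collapses to $gx$ and the $fg$-slot becomes a shifted term of the subsequence, invoking condition (iii) for the needed $\beta$-bound. The only cosmetic difference is that the paper works with a single subsequence $(x_{r_n})$ of a fixed parity and uses its shifts $(x_{r_n+1}),(x_{r_n+2}),(x_{r_n+3})$ where you instead extract two subsequences $(x_{p_n}),(x_{q_n})$ of opposite parities up front; the content is the same.
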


\begin{proof}
Proceeding as in Theorem \ref{res4}, we obtain the $G$-Cauchy sequence $(x_n)$ in $X$ given by $x_1=fx_0,x_2=gx_1,\cdots,x_{2n+1}=fx_{2n},x_{2n+2}=gx_{2n+1},\cdots$ such that $\beta(x_n,x_{n+1},t),\beta(x_{n+1},x_n,t)\le1,~\forall~n\in\mathbb N,t>0.$

Since $(X,M,*)$ is weak $G$-complete, there exists a subsequence $(x_{r_n})$ of $(x_n)$ such that $(x_{r_n})$ converges to some $x\in X$ where $r_n$'s are either all even or all odd.

Without loss of generality, suppose all the $r_n$'s are even.

\noindent\textit{Case I:} Let $f$ be continuous. Then $\forall~t>0,n\in\mathbb N,$ 

$M(x,fx,t)\ge M\left(x,x_{r_n+1},\frac{t}{2}\right)*M\left(x_{r_n+1},fx,\frac{t}{2}\right)\\=M\left(x,x_{r_n+1},\frac{t}{2}\right)*M\left(fx_{r_n},fx,\frac{t}{2}\right)\\=M\left(x,x_{r_n},\frac{t}{4}\right)*M\left(x_{r_n},x_{r_n+1},\frac{t}{4}\right)*M\left(fx_{r_n},fx,\frac{t}{2}\right)$

Taking limit as $n\to\infty,$ we see that the right hand side tends to $1.$ 

Thus we have $M(x,fx,t)=1,~\forall~t>0$ and consequently, $fx=x.$

We now show that $gx=x.$ Choose $t_1>0.$

Then for sufficiently large values of $n,$ $M(x,gx,t_1)\\\ge M(x,x_{r_n},\frac{t_1}{5})*M(x_{r_n},x_{r_n+1},\frac{t_1}{5})*M(x_{r_n+1},x_{r_n+2},\frac{t_1}{5})*M(x_{r_n+2},x_{r_n+3},\frac{t_1}{5})*M(x_{r_n+3},\\gx,\frac{t_1}{5})\\
\ge M(x,x_{r_n},\frac{t_1}{5})*M(x_{r_n},x_{r_n+1},\frac{t_1}{5})*M(x_{r_n+1},x_{r_n+2},\frac{t_1}{5})*M(x_{r_n+2},x_{r_n+3},\frac{t_1}{5})*M(fgx_{r_n+1},\\gfx,\frac{t_1}{5})\\
\ge M(x,x_{r_n},\frac{t_1}{5})*M(x_{r_n},x_{r_n+1},\frac{t_1}{5})*M(x_{r_n+1},x_{r_n+2},\frac{t_1}{5})*M(x_{r_n+2},x_{r_n+3},\frac{t_1}{5})*(\beta(x,x_{r_n+1},\\\frac{t_1}{5})M(gfx,fgx_{r_n+1},\frac{t_1}{5}))\text{ (since}\lim\limits_{n\to\infty}x_{r_n+1}=x,~\beta(x,x_{r_n+1},t)\le1,~\forall~n\in\mathbb N,t>0)\\
\ge M(x,x_{r_n},\frac{t_1}{5})*M(x_{r_n},x_{r_n+1},\frac{t_1}{5})*M(x_{r_n+1},x_{r_n+2},\frac{t_1}{5})*M(x_{r_n+2},x_{r_n+3},\frac{t_1}{5})*\\\psi(\min\{M(x_{r_n+1},x,\frac{t_1}{5}),M(x_{r_n+1},gx_{r_n+1},\frac{t_1}{5}),M(x,fx,\frac{t_1}{5})\})\\
\ge M(x,x_{r_n},\frac{t_1}{5})*M(x_{r_n},x_{r_n+1},\frac{t_1}{5})*M(x_{r_n+1},x_{r_n+2},\frac{t_1}{5})*M(x_{r_n+2},x_{r_n+3},\frac{t_1}{5})*\\\psi(\min\{M(x_{r_n+1},x,\frac{t_1}{5}),M(x_{r_n+1},x_{r_n+2},\frac{t_1}{5}),M(x,fx,\frac{t_1}{5})\}.$

Taking limit as $n\to\infty,$ we see that the right hand side tends to $1.$

Thus $M(x,gx,t_1)=1,~\forall~t_1>0\implies x=gx.$

\noindent\textit{Case II:} Let $g$ be continuous. Then we can similiarly show that $x$ is a common fixed point of $f,g.$

Again, if all the $r_n$'s are odd, it can be similarly shown that $fx=gx=x.$

Uniqueness of the fixed point can be proved by proceeding as in Theorem \ref{res3}.
\end{proof}

By putting $\beta\equiv1$ in Theorem \ref{onlyf} we have the following:

\begin{cor}\label{310}
\normalfont Let $(X,M,*)$ be a weak $G$-complete (KM) fuzzy metric space and $f,g$ be self-mappings on $X$ such that one of them is continuous. Suppose for some $\psi\in\Psi,$ $M(x,y,t)>0\implies$ $$M(gfx,fgy,t)\ge\psi(\min\{M(x,y,t),M(x,fx,t),M(y,gy,t)\}),$$ $\forall~x,y\in X,t>0$ such that  $M(fx_0,gfx_0,t),M(gfx_0,fgfx_0,t)>0,$ $\forall~t>0.$

Then $f$ and $g$ have a common fixed point in $X.$ 

Moreover, if for $x,y~(x\ne y)\in X,$ $M(x,y,t)>0,~\forall~t>0,$ then the fixed point is unique.
\end{cor}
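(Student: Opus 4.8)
The plan is to obtain this corollary as the special case $\beta\equiv 1$ of Theorem~\ref{onlyf}, so essentially the whole argument consists of checking that, under this choice of $\beta$, every hypothesis of Theorem~\ref{onlyf} reduces to (a consequence of) the hypotheses listed here, after which the conclusion is immediate.

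First I would observe that with $\beta(x,y,t)\equiv 1$ the contractive inequality $\beta(x,y,t)M(gfx,fgy,t)\ge\psi(\min\{M(x,y,t),M(x,fx,t),M(y,gy,t)\})$ is literally the inequality assumed in the corollary. Next I would verify conditions (i)--(iii) of Theorem~\ref{onlyf}: for (i), the defining implication of a symmetric pair of $\beta$-admissible mappings, namely $\beta(x,y,t)\le 1\implies\max\{\beta(fx,gy,t),\beta(gy,fx,t),\beta(gx,fy,t),\beta(fy,gx,t)\}\le 1$, holds trivially since every value of $\beta$ equals $1$; for (ii), the requirement $\beta(x_0,fx_0,t)\le1$ is automatic, while the positivity $M(fx_0,gfx_0,t),M(gfx_0,fgfx_0,t)>0$ for all $t>0$ is exactly the assumption placed on $x_0$ in the corollary; for (iii), the conclusion $\beta(x_{r_n},x,t),\beta(x,x_{r_n},t)\le1$ is again automatic for any sequence and any cluster point $x$. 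Finally, for the uniqueness clause, the hypothesis $\beta(x,y,t)\le1$ for all $x,y\in X,t>0$ holds trivially, and the only remaining hypothesis, $M(x,y,t)>0$ for $x\ne y$ and $t>0$, is precisely the extra assumption appended in the corollary.

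Since all hypotheses of Theorem~\ref{onlyf} are thereby satisfied, its conclusion---existence of a common fixed point of $f$ and $g$, together with uniqueness under the separation hypothesis on $M$---yields the corollary verbatim. I do not expect any genuine obstacle: the statement is a routine specialization. Should one prefer a self-contained treatment, one could instead repeat the construction in the proof of Theorem~\ref{res4} to produce the $G$-Cauchy iteration $x_1=fx_0$, $x_2=gx_1,\ldots$ (the positivity assumptions on the relevant $M$-values are what keep this construction meaningful), and then run the argument of Theorem~\ref{onlyf} with every $\beta$-factor deleted, the ``one of $f,g$ is continuous'' hypothesis being used exactly as there; but citing Theorem~\ref{onlyf} directly is cleanest.
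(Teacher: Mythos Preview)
Your proposal is correct and matches the paper's approach exactly: the paper obtains this corollary simply by putting $\beta\equiv 1$ in Theorem~\ref{onlyf}, and you have spelled out in detail why each hypothesis of that theorem is satisfied under this specialization. No changes are needed.
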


We encourage the readers to compare the last result with Corollary \ref{corts} and Theorem \ref{cort}.

\begin{exm}
\normalfont Consider the fuzzy metric space $(X,M,\cdot)$ of Example \ref{ex33}. Let $f,g$ be self-mappings on $X$ such that $$fx=\begin{cases}1&\text{if }x\in\mathbb Q\\\noalign{\vskip9pt}\frac{1}{2}&\text{otherwise}\end{cases}.$$ 

and $gx=1,~\forall~x\in X.$

Clearly, $g$ is continuous on $X$ but $f$ is not. 

Since $gfx=fgy=1,~\forall~x,y\in X$ we have $$M(gfx,fgy,t)\ge\psi(\min\{M(x,y,t),M(x,fx,t),M(y,gy,t)\}),$$ $\forall~x,y\in X,t>0$ and $\psi\in\Psi.$

Thus by setting $\beta\equiv1$ we see that $f,g$ satisfy the hypothesis of Theorem \ref{onlyf}. 

Here $1$ is a common fixed point of $f$ and $g$ which is clearly unique.
\end{exm}

The following question is open:

\begin{ques}
\normalfont Can the continuity hypothesis of both $f$ and $g$ in Theorem \ref{onlyf} be omitted?
\end{ques}

In what follows, we show that the additional hypothesis: 

\textit{for each sequence $(x_n)$ in $X$ satisfying $\beta(x_n,x_{n+1},t)\le1,~\forall~n\in\mathbb N,~t>0,$ there exists $k_0\in\mathbb N$ such that $\beta(x_m,x_n,t)\le1,~\forall~m,n\in\mathbb N$ with $m\ge n>k_0$ and $t>0$}

\noindent enables the conclusion of Theorem \ref{onlyf} to work in a complete (KM) fuzzy metric space.

\begin{theo}
\normalfont Let $(X,M,*)$ be a non-Archimedean complete (KM) fuzzy metric space and $f,g$ be self-mappings on $X$ such that one of them is continuous. Suppose for some $\psi\in\Psi,$ and a mapping $\beta:X^2\times(0,\infty)\to(0,\infty),$ $M(x,y,t)>0\implies$ $$\beta(x,y,t)M(gfx,fgy,t)\ge\psi(\min\{M(x,y,t),M(x,fx,t),M(y,gy,t)\}),$$ $\forall~x,y\in X,t>0$ such that

(i) $(f,g)$ is a symmetric pair of $\beta$-admissible mappings,

(ii) for some $x_0\in X,$ $\beta(x_0,fx_0,t)\le1$ and $M(fx_0,gfx_0,t),M(gfx_0,fgfx_0,t)>0,$ $\forall~t>0,$

(iii) for each sequence $(x_n)$ in $X$ with $\beta(x_n,x_{n+1},t)\le1,~\forall~n\in\mathbb N,t>0$ and a subsequence $(x_{r_n})$ of $(x_n)$ with $\lim\limits_{n\to\infty}x_{r_n}=x,$ we have $\beta(x_{r_n},x,t),\beta(x,x_{r_n},t)\le1,~\forall~n\in\mathbb N,t>0,$

(iv) for each sequence $(x_n)$ in $X$ satisfying $\beta(x_n,x_{n+1},t)\le1,~\forall~n\in\mathbb N,~t>0,$ there exists $k_0\in\mathbb N$ such that $\beta(x_m,x_n,t)\le1,~\forall~m,n\in\mathbb N$ with $m\ge n>k_0$ and $t>0.$

Then $f$ and $g$ have a common fixed point in $X.$ 

Moreover, if $\beta(x,y,t)\le1,~\forall~x,y\in X,t>0$ and for $x,y~(x\ne y)\in X,$ $M(x,y,t)>0,~\forall~t>0,$ then the fixed point is unique.
\end{theo}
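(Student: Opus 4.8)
The plan is to merge the argument of Theorem \ref{res4} with the Cauchy-extraction device of Theorem \ref{th33}, and then close exactly as in Theorem \ref{onlyf}. First I would build the sequence $(x_n)$ by $x_1=fx_0,\ x_2=gx_1,\ \dots,\ x_{2n+1}=fx_{2n},\ x_{2n+2}=gx_{2n+1},\dots$. As in Theorem \ref{res4}, condition (ii) together with the symmetric $\beta$-admissibility (i) and an induction gives $\beta(x_n,x_{n+1},t),\beta(x_{n+1},x_n,t)\le1$ for all $n\in\mathbb N,\ t>0$; and the contractive inequality applied to the pair $(x_{n-1},x_n)$ (resp. $(x_n,x_{n-1})$) for $n$ odd (resp. even), using $gfx_{n-1}=x_{n+1}$ and $fgx_n=x_{n+2}$ in the appropriate parity, yields $M(x_{n+1},x_{n+2},t)\ge\psi(\min\{M(x_{n-1},x_n,t),M(x_n,x_{n+1},t)\})$ for all $n\ge2$, $t>0$. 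Writing $S_t=\min\{M(x_1,x_2,t),M(x_2,x_3,t)\}>0$ (here the hypotheses $M(fx_0,gfx_0,t),M(gfx_0,fgfx_0,t)>0$ are used), an induction on the even- and odd-indexed subsequences separately gives $M(x_{2n-1},x_{2n},t),M(x_{2n},x_{2n+1},t)\ge\psi^{n-1}(S_t)\to1$, so $\lim_{n\to\infty}M(x_n,x_{n+1},t)=1$ for every $t>0$; that is, $(x_n)$ is $G$-Cauchy.

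Next, condition (iv) furnishes $k_1\in\mathbb N$ with $\beta(x_m,x_n,t)\le1$ whenever $m\ge n>k_1$ and $t>0$. I would then argue, as in Theorem \ref{th33}, that $(x_n)$ is in fact Cauchy. Supposing otherwise, fix $\epsilon\in(0,1)$ and $t>0$ for which, for every large $k$, there are $m(k)>n(k)\ge k>k_1$ with $m(k)$ minimal subject to $M(x_{m(k)},x_{n(k)},t)\le1-\epsilon$ while $M(x_{m(k)-1},x_{n(k)},t)>1-\epsilon$; the non-Archimedean triangle inequality forces $\lim_k M(x_{m(k)},x_{n(k)},t)=1-\epsilon$. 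Splitting into the four parity cases of $(m(k),n(k))$, I would insert, between $x_{m(k)}$ (or $x_{m(k)+1}$) and $x_{n(k)}$ (or $x_{n(k)+1}$), the two terms whose $(gf,fg)$-shift lands on $x_{m(k)+2}$ and $x_{n(k)+2}$ respectively, apply the $\beta$-$\psi$-contractive hypothesis to the resulting inner pair (whose $\beta$-value is $\le1$ by the choice of $k_1$), and bound the surviving minimum below, using $G$-Cauchyness, by a quantity tending to $\psi(1-\epsilon)$ — mirroring the auxiliary quantities $u(k),v(k),w(k),w'(k),r(k),r'(k)$ of Theorem \ref{th33}. Chaining these non-Archimedean estimates produces in the limit $1-\epsilon\ge\psi(1-\epsilon)>1-\epsilon$, a contradiction. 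Hence $(x_n)$ converges, say to $x\in X$, by completeness.

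Finally I would identify $x$ as a common fixed point exactly as in Theorem \ref{onlyf}. Passing to a convergent subsequence $(x_{r_n})$ whose indices are all of one parity, condition (iii) (together with (i)) gives $\beta(x_{r_n},x,t),\beta(x,x_{r_n},t)\le1$; depending on whether $f$ or $g$ is the continuous map, a short non-Archimedean decomposition of $M(x,fx,t)$ (two terms) and of $M(x,gx,t)$ (finitely many terms reaching $fgx_{r_n+1}$ or $gfx_{r_n+1}$), combined with continuity and the contractive inequality for $(gf,fg)$, forces $M(x,fx,t)=M(x,gx,t)=1$ for all $t>0$, i.e. $fx=gx=x$; the opposite-parity case is symmetric. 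Uniqueness under the extra hypotheses is the short argument of Theorem \ref{res3}: a second common fixed point $y\ne x$ with $0<M(x,y,t_0)<1$ would give $M(x,y,t_0)\ge\beta(x,y,t_0)M(gfx,fgy,t_0)\ge\psi(M(x,y,t_0))>M(x,y,t_0)$.

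I expect the main obstacle to be the middle step: transporting the Theorem \ref{th33} Cauchy argument to the $(gfx,fgy)$ contractive condition, since the contraction now advances indices by two, so the parity bookkeeping in the definitions of the auxiliary quantities must be redone with the correct two-step shifts so that the inner pair to which the hypothesis is applied is precisely a pair $(x_i,x_j)$ with $x_{i+2}$ and $x_{j+2}$ appearing in the chain. Once the right inner pairs are selected in each of the four cases, the inequalities themselves are routine consequences of monotonicity of $M$, continuity of $*$, the properties of $\psi$, and $G$-Cauchyness of $(x_n)$.
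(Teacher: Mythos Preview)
Your proposal is correct and follows essentially the same approach as the paper: construct the $G$-Cauchy iteration via Theorem \ref{res4}, upgrade it to Cauchy via the non-Archimedean contradiction argument of Theorem \ref{th33} using hypothesis (iv), invoke completeness, and then reproduce the fixed-point identification of Theorem \ref{onlyf} together with the uniqueness step of Theorem \ref{res3}. The paper's own proof is in fact nothing more than a one-paragraph pointer to exactly these four earlier theorems, so your write-up is more explicit than the paper's, and your remark that the parity bookkeeping in the Cauchy step must be redone (the $(gf,fg)$ contraction advances indices by two rather than one) is exactly the point the paper leaves implicit under ``an argument similar to Theorem \ref{th33}.''
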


\begin{proof}
\normalfont Proceeding as in Theorem \ref{res4}, we obtain the $G$-Cauchy sequence $(x_n)$ in $X$ given by $x_1=fx_0,x_2=gx_1,\cdots,x_{2n+1}=fx_{2n},x_{2n+2}=gx_{2n+1},\cdots$ that will turn out to be Cauchy following an argument similar to Theorem \ref{th33}. Since $X$ is complete, there exists $x\in X$ such that $\lim\limits_{n\to\infty}x_n=x.$ Similar to Theorem \ref{onlyf}, we can show that $fx=gx=x.$ Uniqueness of the common fixed point can be proved by proceeding as in Theorem \ref{res3}.
\end{proof}

\begin{cor}
\normalfont Let $(X,M,*)$ be a non-Archimedean complete (KM) fuzzy metric space and $f$ be a continuous self-mapping on $X$. Suppose for some $\psi\in\Psi,$ and a mapping $\beta:X^2\times(0,\infty)\to(0,\infty),$ $M(x,y,t)>0\implies$ $$\beta(x,y,t)M(f^2x,f^2y,t)\ge\psi(\min\{M(x,y,t),M(x,fx,t),M(y,fy,t)\}),$$ $\forall~x,y\in X,t>0$ such that

(i) $(f,f)$ is a symmetric pair of $\beta$-admissible mappings,

(ii) for some $x_0\in X,$ $\beta(x_0,fx_0,t)\le1$ and $M(fx_0,f^2x_0,t),M(f^2x_0,f^3x_0,t)>0,$ $\forall~t>0,$

(iii) for each sequence $(x_n)$ in $X$ with $\beta(x_n,x_{n+1},t)\le1,~\forall~n\in\mathbb N,t>0$ and a subsequence $(x_{r_n})$ of $(x_n)$ with $\lim\limits_{n\to\infty}x_{r_n}=x,$ we have $\beta(x_{r_n},x,t),\beta(x,x_{r_n},t)\le1,~\forall~n\in\mathbb N,t>0$,

(iv) for each sequence $(x_n)$ in $X$ satisfying $\beta(x_n,x_{n+1},t)\le1,~\forall~n\in\mathbb N,~t>0,$ there exists $k_0\in\mathbb N$ such that $\beta(x_m,x_n,t)\le1,~\forall~m,n\in\mathbb N$ with $m\ge n>k_0$ and $t>0.$

Then $f$ has a fixed point in $X.$ 

Moreover, if $\beta(x,y,t)\le1,~\forall~x,y\in X,t>0$ and for $x,y~(x\ne y)\in X,$ $M(x,y,t)>0,~\forall~t>0,$ then the fixed point is unique.
\end{cor}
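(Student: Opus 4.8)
The plan is to obtain this corollary as a special case of the theorem immediately above it by specializing $g=f$. First I would verify that, under the substitution $g=f$, every hypothesis of that parent theorem becomes precisely a hypothesis of the corollary. The contractive implication $M(x,y,t)>0\implies\beta(x,y,t)M(gfx,fgy,t)\ge\psi(\min\{M(x,y,t),M(x,fx,t),M(y,gy,t)\})$ becomes $M(x,y,t)>0\implies\beta(x,y,t)M(f^2x,f^2y,t)\ge\psi(\min\{M(x,y,t),M(x,fx,t),M(y,fy,t)\})$, which is exactly the displayed inequality of the corollary; the symmetric $\beta$-admissibility of $(f,g)$ becomes that of $(f,f)$; the positivity conditions $M(fx_0,gfx_0,t)>0$ and $M(gfx_0,fgfx_0,t)>0$ become $M(fx_0,f^2x_0,t)>0$ and $M(f^2x_0,f^3x_0,t)>0$; and conditions (iii) and (iv) are already stated verbatim in the required form. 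Finally, the clause ``one of $f,g$ is continuous'' holds because $f$ is continuous and $g=f$.

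Having checked this, I would simply apply the preceding theorem to the pair $(f,f)$ to produce a common fixed point $x$ of $f$ and $g=f$; such an $x$ is a fixed point of $f$. Under the additional assumptions that $\beta(x,y,t)\le1$ for all $x,y\in X,t>0$ and that $M(x,y,t)>0$ for all distinct $x,y$ and all $t>0$, the same theorem yields uniqueness of the common fixed point, hence of the fixed point of $f$, and the corollary follows.

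Should a self-contained argument be preferred instead, I would transcribe the proof of the parent theorem with $g=f$: form the Picard iterates $x_{n+1}=fx_n$ starting from $x_0$, use (ii) together with the symmetric $\beta$-admissibility to deduce $\beta(x_n,x_{n+1},t)\le1$ for all $n$ by induction, and apply the contractive inequality along the iterates to get $M(x_{2n-1},x_{2n},t),M(x_{2n},x_{2n+1},t)\ge\psi^{n-1}(S_t)$ with $S_t=\min\{M(x_1,x_2,t),M(x_2,x_3,t)\}$, so that $(x_n)$ is $G$-Cauchy; then invoke (iv) and the non-Archimedean inequality to upgrade $(x_n)$ to a Cauchy sequence exactly as in Theorem~\ref{th33}, use completeness to obtain a limit $x$, and combine (iii), the continuity of $f$, and the contractive inequality (split into the even/odd cases) to force $M(x,fx,t)=1$, i.e. $fx=x$; uniqueness would then be argued as in Theorem~\ref{res3}. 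I do not expect a genuine obstacle along either route; the only point requiring care is the routine bookkeeping that the substitution $g=f$ is hypothesis-preserving — in particular that both double-iterate positivity conditions $M(fx_0,f^2x_0,t)>0$ and $M(f^2x_0,f^3x_0,t)>0$ are retained, since these are exactly what is needed to launch the estimate on $S_t$.
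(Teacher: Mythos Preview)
Your proposal is correct and matches the paper's approach: the corollary is stated without proof in the paper, being an immediate specialization of the preceding theorem obtained by setting $g=f$, exactly as you outline. Your hypothesis-by-hypothesis verification that the substitution $g=f$ is hypothesis-preserving is precisely the routine check the paper leaves implicit.
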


\end{document}